\documentclass[11pt,reqno]{amsart}
\usepackage[T1]{fontenc}
\usepackage[utf8]{inputenc}
\usepackage{lmodern}
\usepackage{amsmath,amssymb,mathtools}
\usepackage{microtype}
\usepackage[margin=1.12in]{geometry}
\usepackage{xcolor}
\usepackage[colorlinks=true,linkcolor=blue!45!black,citecolor=blue!45!black,urlcolor=blue!45!black]{hyperref}
\hypersetup{pdftitle={Minimal hypersurfaces of finite delta-index in R^4 and R^5},
            pdfauthor={Marcos Ranieri}}
\newtheorem{theorem}{Theorem}[section]
\newtheorem{proposition}[theorem]{Proposition}
\newtheorem{lemma}[theorem]{Lemma}
\newtheorem{corollary}[theorem]{Corollary}
\numberwithin{equation}{section}
\newcommand{\R}{\mathbb R}
\newcommand{\Sph}{\mathbb S}
\newcommand{\dd}{\,d\mu}
\newcommand{\td}{\,d\widetilde\mu}
\newcommand{\ind}{\operatorname{ind}}
\newcommand{\Vol}{\operatorname{Vol}}
\newcommand{\Ric}{\operatorname{Ric}}
\newcommand{\Scal}{\operatorname{Scal}}
\newcommand{\biRic}{\operatorname{biRic}}
\newcommand{\supp}{\operatorname{supp}}
\newcommand{\ess}{\mathrm{ess}}
\newcommand{\tg}{\widetilde g}
\newcommand{\tgrad}{\widetilde\nabla}
\newcommand{\tDelta}{\widetilde\Delta}
\newcommand{\Cc}{C_c^\infty}
\DeclareMathOperator{\diver}{div}
\newcommand{\dist}{\operatorname{dist}}
\allowdisplaybreaks[1]
\title[Minimal hypersurfaces of finite $\delta$-index]
      {Minimal hypersurfaces of finite $\delta$-index in
       $\mathbb R^4$ and $\mathbb R^5$}
\author{Marcos Ranieri}
\address{Instituto de Matem\'atica, Universidade Federal de Alagoas, Macei\'o, Brazil}
\email{marcos.ranieri@im.ufal.br}
\date{September 12, 2026}
\subjclass[2020]{Primary 53A10; Secondary 53C21, 58J50}
\keywords{Minimal hypersurface, Bernstein theorem, delta-stability, Morse index, total curvature, mu-bubble}

\begin{document}
\begin{abstract}
Let $X:M^n\to\mathbb R^{n+1}$ be a complete, connected, two-sided
minimal immersion without boundary, where $n=3,4$. We prove that
finite $\delta$-index, finite Morse index, and finite total curvature
are equivalent for $\delta>((n-1)/n)^2$, without assumptions on
properness, volume growth, or topology. The main estimate shows
that $\delta$-stability outside a compact set and finite-dimensional
$H_c^1(M;\mathbb R)$ imply intrinsic Euclidean volume growth for
$\delta>(n-2)/n$. We also deduce the sharp $\delta$-stable Bernstein
theorem in this latter range from results of Hong--Li--Wang
and Florit-Simon.
\end{abstract}
\maketitle

\section{Introduction}

Let $X:M^n\to\R^{n+1}$ be a complete, connected, two-sided minimal immersion without boundary, where $n\geq3$. For $\delta>0$, set
\begin{equation}\label{eq:form}
 Q_\delta(f)=\int_M\bigl(|\nabla f|^2-\delta|A|^2f^2\bigr)\dd,
 \qquad f\in\Cc(M),
\end{equation}
where $A$ is the second fundamental form. We call $X$
\emph{$\delta$-stable} if $Q_\delta\geq0$. The \emph{$\delta$-index},
denoted by $\ind_\delta(M)$, is the supremum of the dimensions of
subspaces of $\Cc(M)$ on which $Q_\delta$ is negative definite.
Thus $\ind_1$ is the Morse index, and $\delta$-stability is weaker
than stability when $0<\delta<1$.

The stable Bernstein theorem was proved for $n=3$ by Chodosh--Li
\cite{CL} and for $n=4$ by Chodosh--Li--Minter--Stryker \cite{CLMS}.
Mazet \cite{Mazet} proved the case $n=5$.
For other proofs in dimension $n=3$, see also
\cite{CCMMR,CMR,CLA,HW}.
In dimensions $n=3,4$, finite Morse index is also equivalent to
finite total curvature, without a volume growth assumption
\cite[Theorem~5]{CL}, \cite[Theorem~1.4]{CLMS}.

The $n$-dimensional catenoid is $(n-2)/n$-stable \cite{TZ}, so Bernstein rigidity cannot hold at or below this coefficient \footnote{%
Here we translate Tam--Zhou's convention: their parameter
$\delta_{\mathrm{TZ}}$ appears through the potential coefficient
$1-\delta_{\mathrm{TZ}}$ in \cite[Equation~(1.2)]{TZ}. Thus their
$2/n$-stability corresponds to $(n-2)/n$-stability in the convention
of \eqref{eq:form}}. Hong--Li--Wang \cite[Theorem~1.7]{HLW} proved rigidity for $\delta>3/8$ when $n=3$ and $\delta>2/3$ when $n=4$. Under extrinsic Euclidean volume growth, their result reaches $\delta>(n-2)/n$ in both dimensions \cite[Corollary~1.4]{HLW}. They also proved intrinsic Euclidean volume growth in this range for simply connected hypersurfaces with finitely many ends \cite[Proposition~1.10(b)]{HLW}.
Intrinsic volume estimates at the same thresholds are also stated
by Cheng--Wei \cite[Theorem~4.2]{CW}. In dimension $n=3$, Catino--Mari--Mastrolia--Roncoroni \cite[Theorems~1.11 and~1.13]{CMMR} proved rigidity for $\delta>1/3$ under properness and showed that a $1/3$-stable hypersurface has one end or is a catenoid.

Florit-Simon \cite{FS} proved that finite intrinsic Euclidean
volume growth of a complete minimal immersion implies properness
and extrinsic Euclidean volume growth. We combine this comparison
with the results of Hong--Li--Wang and the end theorem of
Cheng--Zhou \cite{CZ} to reach the catenoid threshold for $n=3,4$
without additional hypotheses. The reduction also yields the local
curvature estimate needed below.

\begin{theorem}\label{thm:bernstein}
Let $X:M^n\to\R^{n+1}$ be a complete, connected, two-sided minimal
immersion without boundary, where $n\in\{3,4\}$. If
\[
 \delta>\frac{n-2}{n}
 \qquad\text{and}\qquad Q_\delta\geq0,
\]
then $X$ is an isometry onto an affine hyperplane.
\end{theorem}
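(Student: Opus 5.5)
Since $\delta>(n-2)/n$, it suffices to show that $M$ has intrinsic Euclidean volume growth. Florit-Simon \cite{FS} then upgrades this to properness and extrinsic Euclidean volume growth, and \cite[Corollary~1.4]{HLW} gives flatness in exactly this range of $\delta$. The proof is therefore organized around producing
\[
\Vol(B_R(p))\le CR^n\qquad\text{for all } R>0,
\]
with $B_R(p)$ the intrinsic ball. I would get this from \cite[Proposition~1.10(b)]{HLW}, which requires $M$ to be simply connected with finitely many ends. Neither property is assumed, so the work lies in reducing to that situation.

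First I control the ends. $\delta$-stability gives a weighted inequality $\int |\nabla f|^2\ge \delta\int |A|^2 f^2$. Combined with the Simons/Kato inequality $|A|\Delta|A|+\frac{2}{n}|\nabla|A||^2\ge-|A|^4$, this is the input for the end theorem of Cheng--Zhou \cite{CZ}. That theorem shows a $\delta$-stable minimal hypersurface with $\delta>(n-2)/n$ has only one end, or no nonparabolic ends beyond one. In the parabolic case I would argue directly: on a parabolic end, take a log-cutoff in the stability inequality to get $\int|A|^2=0$, hence flatness. So I may assume $M$ has exactly one end.

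Next I remove the simple connectivity assumption by passing to the universal cover $\pi:\tilde M\to M$. The composition $X\circ\pi$ is still a complete two-sided minimal immersion. It is also $\delta$-stable, because $\delta$-stability is equivalent to the existence of a positive solution of $\Delta u+\delta|A|^2u=0$ (Fischer-Colbrie--Schoen), and such a solution lifts. I would then apply Cheng--Zhou on $\tilde M$ to get one end, and \cite[Proposition~1.10(b)]{HLW} to get intrinsic Euclidean volume growth of $\tilde M$. Florit-Simon makes $X\circ\pi$ proper with extrinsic Euclidean growth, and \cite[Corollary~1.4]{HLW} makes $X\circ\pi(\tilde M)$ a hyperplane. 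Since $X\circ\pi$ and $X$ have the same image, $X(M)$ lies in a hyperplane. A complete immersion into a hyperplane that is a local isometry is a covering of $\R^n$, hence an isometry onto the hyperplane.

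The main obstacle is checking that the hypotheses of \cite[Proposition~1.10(b)]{HLW} and of the Cheng--Zhou end theorem hold verbatim on $\tilde M$ at every $\delta>(n-2)/n$, in particular for $n=4$ near $\delta=1/2$. Here the exponent range in the weighted Simons argument must be nonempty, which requires $\delta>(n-2)/n$ together with the improved Kato constant $2/n$. The parabolic/nonparabolic dichotomy also needs care, since a finiteness statement for nonparabolic ends alone is not enough. If Cheng--Zhou only bounds nonparabolic ends, I would treat the parabolic ones with the log-cutoff argument above.
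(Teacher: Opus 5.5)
Your chain of citations is the same as the paper's: lift to the universal cover, use Cheng--Zhou to get finitely many ends, apply \cite[Proposition~1.10(b)]{HLW} for intrinsic Euclidean growth, upgrade with \cite{FS}, and conclude with \cite[Corollary~1.4]{HLW}. Lifting $\delta$-stability through a positive solution of $\Delta u+\delta|A|^2u=0$ is a valid alternative to the paper's inequality \eqref{eq:cover}.

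The genuine gap is the step ``apply Cheng--Zhou on $\tilde M$ to get one end.'' The end theorem of \cite{CZ} (Theorem~1.1 and Corollary~1.1 there) does not apply to arbitrary $\delta$-stable hypersurfaces. It assumes a growth bound on the second fundamental form: $o(\log R)$ when $n=3$ and $o(R^{(n-3)/2})$ when $n\geq4$. For a general $\delta$-stable immersion, and hence for its universal cover, you have no a priori control of $|A|$. So the finitely-many-ends hypothesis of \cite[Proposition~1.10(b)]{HLW} is unsupported, and so is everything after it. Your ``main obstacle'' paragraph looks for trouble in the Simons exponent range and in parabolic ends, but this curvature hypothesis is the one that is never checked.

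The missing idea is to split the proof in two.
\begin{enumerate}
\item Assume first that $\sup_M|A|<\infty$. Then $\tilde M$ also has bounded curvature, the growth hypothesis of \cite{CZ} holds trivially, and your chain of citations goes through.
\item Remove the bound by point-picking. If $M$ were nonflat, fix $p$ with $|A|(p)>0$. Let $q_j$ maximize $|A|(x)\bigl(R_j-d_M(p,x)\bigr)$ on $\overline{B_{R_j}(p)}$, and rescale by $b_j=|A|(q_j)$. By \eqref{eq:point-picking}, the rescaled immersions satisfy $|A_j|\leq2$ on intrinsic balls whose radii tend to infinity, and $|A_j|(q_j)=1$. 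Compactness for immersions with bounded second fundamental form (e.g.\ \cite[Proposition~8.2]{ChNotes}) gives a complete two-sided limit with $|A_\infty|\leq2$ and $|A_\infty|(q_\infty)=1$. It is $\delta$-stable, since test functions on the limit transport to the approximating domains. This contradicts the bounded-curvature case.
\end{enumerate}

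Two smaller points. The parabolic-end case is vacuous, because \eqref{eq:sobolev} makes every end of a complete minimal hypersurface in $\R^{n+1}$, $n\geq3$, nonparabolic. Your log-cutoff sketch would in any case give only finite total curvature on that end, not $\int|A|^2=0$, when other ends are present. Also, applying the end theorem to $M$ before lifting is redundant, since only $\tilde M$ is used.
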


The thresholds are $\delta>1/3$ for $n=3$ and $\delta>1/2$ for $n=4$; the catenoid shows that both are sharp. A point-picking argument removes the curvature bound used in the reduction.

Our main result concerns finite $\delta$-index.

\begin{theorem}\label{thm:index}
Let $X:M^n\to\R^{n+1}$ be a complete, connected, two-sided minimal
immersion without boundary, where $n\in\{3,4\}$, and let
\[
 \delta>\left(\frac{n-1}{n}\right)^2.
\]
Then the following conditions are equivalent:
\begin{enumerate}\label{eq:equivalence}
 \item[i)]$\ind_\delta(M)<\infty$;
 \item[ii)]$ \ind_1(M)<\infty;$
\item[iii)]$\int_M|A|^n\dd<\infty.$
\end{enumerate}
Under these conditions $X$ is proper,
\begin{equation}\label{eq:extrinsic-volume}
 \Vol\bigl(X^{-1}(B_R^{\R^{n+1}}(0))\bigr)\leq CR^n
 \qquad(R\geq1)
\end{equation}
for some $C<\infty$, and
\begin{equation}\label{eq:curvature-decay}
 |X(x)|\,|A|(x)\longrightarrow0
 \qquad\text{as }x\longrightarrow\infty.
\end{equation}
Moreover, $\ind_\tau(M)<\infty$ for every $\tau>0$.
\end{theorem}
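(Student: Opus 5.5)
The plan is to close the cycle of implications i) $\Rightarrow$ iii) $\Rightarrow$ ii) $\Rightarrow$ i), and to read off the extra conclusions along the way. The implication ii) $\Rightarrow$ i) is immediate for $\delta\le1$: $Q_\delta\ge Q_1$, so any subspace on which $Q_\delta<0$ also has $Q_1<0$, and therefore $\ind_\delta\le\ind_1$. For $\delta>1$ this route is unavailable, so I would obtain i) from iii) instead. Finite total curvature gives $|A|^2\le C r^{-2}$ outside a compact set, and indeed $o(r^{-2})$. Combined with the Euclidean Hardy/Sobolev inequality on a proper hypersurface of Euclidean volume growth, this makes $|A|^2$ a potential that is small in $L^{n/2}$ outside a large ball. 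A standard Cwikel--Lieb--Rozenblum-type argument (e.g.\ Fischer-Colbrie's argument) then gives $\ind_\tau<\infty$ for every $\tau>0$, which is also the final assertion of Theorem~\ref{thm:index}. Likewise iii) $\Rightarrow$ ii) follows from the same bound with $\tau=1$, or directly from the cited results \cite[Theorem~5]{CL}, \cite[Theorem~1.4]{CLMS}.

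The substance is therefore i) $\Rightarrow$ iii), together with properness, \eqref{eq:extrinsic-volume} and \eqref{eq:curvature-decay}. If $\ind_\delta<\infty$, the usual argument shows that $M$ is $\delta$-stable outside a compact set $K$. I would then run the main estimate announced in the abstract: $\delta$-stability outside a compact set together with $\dim H^1_c(M;\R)<\infty$ gives intrinsic Euclidean volume growth for $\delta>(n-2)/n$. The required finiteness of $H^1_c$ is obtained in the Chodosh--Li/CLMS way. Finite index implies finitely many ends via Cao--Shen--Zhu/Li--Wang-type arguments, and the harmonic-function theory of Li--Tam bounds the space of $L^2$ harmonic $1$-forms. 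Here the relevant Bochner inequality for minimal hypersurfaces is $|\nabla\omega|$-refined Kato with constant $n/(n-1)$, and it is exactly this that requires $\delta>((n-1)/n)^2$ rather than the weaker $(n-2)/n$. This is how the threshold in the statement enters. With intrinsic volume growth in hand, Florit-Simon \cite{FS} gives properness and \eqref{eq:extrinsic-volume}.

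Next I would rescale. Because $\delta>((n-1)/n)^2>(n-2)/n$, the point-picking argument and Theorem~\ref{thm:bernstein} give the curvature estimate $|A|(x)\,d(x,K)\le C$. Blow-downs along sequences $R_i\to\infty$ of $M\setminus K$ then converge, by the volume bound, to $\delta$-stable minimal cones that are smooth away from $0$. For $n=3,4$ such cones are flat, by Simons-type arguments for $\delta$-stable cones in this range, or by Theorem~\ref{thm:bernstein} applied to the cone's link. Allard regularity then yields \eqref{eq:curvature-decay}, and each end is a graph over a hyperplane with decay of order $|x|^{1-n}$, which gives iii).

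The main obstacle I expect is the $H^1_c$ finiteness under only $\delta$-stability with $\delta$ possibly below $1$. The constant bookkeeping in the Bochner/Kato estimate for harmonic $1$-forms, used as a test against $Q_\delta$, must match $((n-1)/n)^2$ exactly. I would try testing $Q_\delta$ with $f=|\omega|^{\alpha}\varphi$ and optimizing the exponent $\alpha$.
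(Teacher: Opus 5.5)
Up to the blow-down your outline follows the paper: exterior $\delta$-stability, finiteness of $H_c^1(M;\R)$ via Bochner--Kato at the threshold $((n-1)/n)^2$, Proposition~\ref{prop:volume}, Florit-Simon, the curvature estimate, and flat cones. The gap is in the last step of i)$\Rightarrow$iii). Flat blow-downs, the bound $r|A|\leq C'$ and smooth sheetwise convergence do give $r|A|\to0$. (Allard is not the right tool for this, since ends asymptotic to parallel planes produce a blow-down with multiplicity.) What does not follow is your next claim, that each end is a graph over a fixed hyperplane with $|x|^{1-n}$ decay.

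Scale-by-scale flatness only gives graphs over planes $P_R$ that may rotate with $R$. The condition $r|A|\to0$ bounds the tilt across each dyadic annulus by an $o(1)$ quantity, and such quantities need not be summable. Similarly,
\[
 \int_{\{R<r<2R\}}|A|^n\dd\leq C\Bigl(\sup_{\{R<r<2R\}}r|A|\Bigr)^n
\]
yields no summability over dyadic scales. Fixing the plane would need a uniqueness theorem for tangent cones at infinity (of Allard--Almgren or Simon type). The $|x|^{1-n}$ expansion would then need Schoen's analysis of exterior minimal graphs as well. Neither is supplied, so iii) is not reached.

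The paper avoids all of this. Properness and $r|A|\to0$ give a compact $K'$ outside which $|A|^2\leq(n-2)^2/(8r^2)$. The extrinsic Hardy inequality $\frac{(n-2)^2}{4}\int_M f^2r^{-2}\dd\leq\int_M|\nabla f|^2\dd$ follows from $\diver(\nabla r/r)=(n-2|\nabla r|^2)/r^2$ and $|\nabla r|\leq1$. It yields
\[
 Q_1(f)\geq\frac12\int_M|\nabla f|^2\dd-\int_{K'}|A|^2f^2\dd ,
\]
so $\ind_1(M)<\infty$ by the CLR bound with the bounded, compactly supported potential $2\mathbf1_{K'}|A|^2$. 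Finite total curvature then comes from the hard direction ii)$\Rightarrow$iii) of \cite[Theorem~5]{CL} and \cite[Theorem~1.4]{CLMS}. You cite these theorems only for the easy direction, but the cycle closes only through their converse.

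Three smaller points.

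\emph{Flatness of the cone.} Your alternative of applying Theorem~\ref{thm:bernstein} to the cone's link is not available: a nonflat cone is singular at the origin, so it is not a complete smooth immersion. The Simons-type route is the one that works. In the paper it is the inequality $\delta(1+\delta)\int_C r^{-2}|A_C|^{2\delta}\phi^2\leq\int_C|A_C|^{2\delta}|\nabla\phi|^2$ for radial annular tests. Its radial Hardy quotient rules out nonflat cones because $\delta>(n-2)^2/(4(n-1))$.

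\emph{The direction iii)$\Rightarrow$i).} You need neither pointwise decay, nor properness, nor Hardy here. The hypothesis $\int_M|A|^n\dd<\infty$ says exactly that $\tau|A|^2\in L^{n/2}$, and the Michael--Simon Sobolev inequality holds on every minimal immersion. So the CLR bound gives $\ind_\tau(M)\leq C_n\tau^{n/2}\int_M|A|^n\dd$ for all $\tau>0$ at once. Properness and volume growth are not part of hypothesis iii), so your detour would need extra input.

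\emph{Finiteness of $H_c^1$.} Your route through finitely many ends and $\dim\mathcal H^1_{(2)}(M)<\infty$ is a legitimate alternative, but it needs a step you omit. That step is the exact sequence $H^0(M)\to H^0(\text{ends})\to H^1_c(M)\to H^1(M)$, together with the injection of the image of $H^1_c(M)\to H^1(M)$ into reduced $L^2$ cohomology. The paper instead proves $\|\omega\|_{L^q(M)}\leq C\|\omega\|_{L^2(K_1)}$ from a localized Caccioppoli estimate. It then injects $H^1_c$ into $\mathcal H^1_{(2)}$ by projection in $\mathcal D^{1,2}(M)$, which handles the ends automatically.

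In that estimate, keep the exponent $\alpha=1$. Optimizing over $\alpha$ does lower the formal threshold to $(n-2)/n$, at $\alpha=(n-2)/(n-1)$. But then the cutoff term $\int|\omega|^{2\alpha}|\nabla\eta|^2$ is no longer controlled by $R^{-2}\|\omega\|_{L^2}^2$. Bounding it requires the volume growth you are trying to prove.
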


The ranges are $\delta>4/9$ for $n=3$ and $\delta>9/16$ for $n=4$.
The implication from finite total curvature to finite index is
classical \cite{Tysk}; the ordinary-index converse is due to
\cite[Theorem~5]{CL} and \cite[Theorem~1.4]{CLMS}.
The new implication is therefore
$\ind_\delta<\infty\Rightarrow\ind_1<\infty$ for the stated
range below one. Thus the weaker index defines the same class of
complete minimal hypersurfaces, without assuming properness,
a volume growth bound, or finite topology.

The geometric step is the Proposition~\ref{prop:volume}. For $n=3,4$
and $\delta>(n-2)/n$, stability outside a compact set and
$\dim H_c^1(M;\R)<\infty$ imply intrinsic Euclidean volume growth.
Here $H_c^1(M;\R)$ denotes cohomology with compact support.
We construct weighted $\mu$-bubble separators in compact bands
contained in the stable exterior. The estimates of
\cite[Section~5]{HLW} bound the conformal area of each component.
After filling relatively compact complementary components,
Poincar\'e duality bounds their number by $1+\dim H_c^1(M;\R)$.
The resulting bound for the total boundary area gives the
volume estimate by the Michael--Simon inequality.

For ordinary stability, the exterior construction and the control
of separator components by harmonic one-forms already appear in
\cite[Section~6]{CL} and in the proof of
\cite[Theorem~1.4]{CLMS}, using \cite{LW02}.
We combine this construction with the $\delta$-stability estimates
of Hong--Li--Wang; finite-dimensional compactly supported
cohomology replaces the simple connectivity in their global
volume argument.

For $n\geq3$, Fu \cite{Fu} proved that global $\delta$-stability forces the
vanishing of square-integrable harmonic one-forms and of
$H_c^1(M;\R)$ when $\delta>((n-1)/n)^2$.
We localize the Bochner--Kato estimate to obtain finite
dimensionality under exterior stability. For $n=3,4$, this supplies
the cohomological hypothesis of Proposition~\ref{prop:volume}.
Florit-Simon's theorem then gives properness and
extrinsic volume growth. The local curvature estimate and the cone calculation of
Hong--Li--Wang imply $|X||A|\to0$. The extrinsic Hardy inequality
then gives finite Morse index.

The spectrum is a consequence of this geometry. We use
$\Delta=\diver\nabla$, so that $-\Delta$ is nonnegative.

\begin{corollary}\label{cor:spectrum}
Let $X:M^n\to\R^{n+1}$ be a complete, connected, two-sided minimal
immersion without boundary, where $n\in\{3,4\}$. Suppose that
$\ind_\delta(M)<\infty$ for some
$\delta>((n-1)/n)^2$. Then the self-adjoint Schr\"odinger operators
\[
 L_\tau=-\Delta-\tau|A|^2,\qquad \tau\geq0,
\]
on $L^2(M)$ satisfy $\sigma_{\ess}(L_\tau)=[0,\infty)$.
For each $\tau>0$, the negative spectrum consists of finitely
many eigenvalues, counted with multiplicity.
\end{corollary}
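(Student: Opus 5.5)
By Theorem~\ref{thm:index}, the hypothesis $\ind_\delta(M)<\infty$ for some $\delta>((n-1)/n)^2$ gives four facts: $X$ is proper, it has extrinsic Euclidean volume growth \eqref{eq:extrinsic-volume}, it satisfies the decay \eqref{eq:curvature-decay}, and $\ind_\tau(M)<\infty$ for every $\tau>0$. The proof reduces the corollary to these four facts. Since $|A|^2$ is smooth and bounded on compact sets, $L_\tau$ is essentially self-adjoint on $\Cc(M)$; completeness of $M$ together with the standard Chernoff/Gaffney argument for Schr\"odinger operators with locally bounded potential bounded below gives this. The plan has two parts.

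\emph{Essential spectrum is contained in $[0,\infty)$, and finiteness of the negative spectrum.} By \eqref{eq:curvature-decay} and properness, $|A|(x)\to0$ as $x\to\infty$, since $|X(x)|\to\infty$. Fix $\varepsilon>0$ and choose a compact $K$ with $\tau|A|^2\le\varepsilon$ outside $K$. Then $L_\tau$ differs from $-\Delta-\tau|A|^2\chi_K$ by a perturbation whose form is bounded by $\varepsilon$. Moreover, $\tau|A|^2\chi_K$ is a bounded compactly supported potential, which is relatively compact with respect to $-\Delta$ by Rellich on $K$. Hence $\inf\sigma_{\ess}(L_\tau)\ge-\varepsilon$ for every $\varepsilon$, so $\sigma_{\ess}(L_\tau)\subset[0,\infty)$. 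For $\tau>0$, the min--max principle identifies the number of negative eigenvalues, counted with multiplicity, with $\ind_\tau(M)$: the quadratic form of $L_\tau$ is $Q_\tau$ and $\Cc(M)$ is a form core. By Theorem~\ref{thm:index} this number is finite, and since negative spectrum cannot accumulate except at $\inf\sigma_{\ess}\ge0$, it consists of finitely many eigenvalues. For $\tau=0$ there is nothing to prove beyond $\sigma(-\Delta)\subset[0,\infty)$.

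\emph{The reverse inclusion $[0,\infty)\subset\sigma_{\ess}(L_\tau)$.} This step is the main one. I would use Weyl sequences supported far out on an end, where the geometry is nearly flat. Take $\lambda=k^2\ge0$. On an annular region $X^{-1}(B_{2R}\setminus B_R)$, the decay $|X||A|\to0$ implies that the end is, for large $R$, a graph over a region of a hyperplane with small gradient, after rescaling by $R$. This is the cone calculation behind \eqref{eq:curvature-decay}: the tangent cone at infinity is a union of planes. So the rescaled annulus converges smoothly to a flat annulus. There I would take $f_R=\varphi(|X|/R)\,e^{ik\langle X,e\rangle}$ with a fixed cutoff $\varphi$ and a unit vector $e$ tangent to the limit plane; real and imaginary parts can be used if real functions are preferred. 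A direct computation gives
\[
\|L_\tau f_R-k^2f_R\|_{L^2}\le o(1)\,\|f_R\|_{L^2}.
\]
The error terms come from the cutoff (of order $R^{-1}$), from $|\nabla^M\langle X,e\rangle|^2-1$ and $\Delta\langle X,e\rangle=0$ (controlled by the flatness), and from $\tau|A|^2=o(R^{-2})$. The norm satisfies $\|f_R\|^2\sim cR^n$ by the volume bounds. The supports are disjoint along a sequence $R_j\to\infty$, so the $f_{R_j}$ converge weakly to zero, and this is a singular Weyl sequence. The delicate points are making the graphical description quantitative: with a uniform lower volume bound on annuli, which monotonicity provides, and $|\nabla^M\langle X,e\rangle|\to1$ in $L^2$-average on the support, it suffices. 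Both follow from the curvature decay, since integrating $|A|\le o(1/R)$ across the annulus shows that the unit normal is nearly constant there.
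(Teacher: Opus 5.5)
Your argument is correct in outline, but for the inclusion $[0,\infty)\subset\sigma_{\ess}(L_\tau)$ it takes a different route from the paper.

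\textbf{The paper's route.} It uses $|A|\to0$ and the Gauss equation to get $\Ric\to0$ at infinity, and cites Lu--Zhou for $\sigma_{\ess}(-\Delta)=[0,\infty)$. It then shows that multiplication by $|A|^2$ is relatively compact with respect to $-\Delta$ (Rellich on a compact piece, small operator norm on the tail). Weyl's theorem then gives both inclusions at once for every $\tau\geq0$. Finiteness of the negative spectrum comes from the CLR bound \eqref{eq:index-bound}, which is the same input as your appeal to $\ind_\tau(M)<\infty$ and min--max.

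\textbf{Your route.} Your upper inclusion is essentially the same compact-plus-small argument. Your lower inclusion replaces the Lu--Zhou theorem by explicit plane-wave Weyl sequences. These are built from minimality, namely $\Delta\langle X,e\rangle=0$ and $|\nabla\langle X,e\rangle|^2=1-\langle e,\nu\rangle^2$, together with the properness, volume bounds and decay supplied by Theorem~\ref{thm:index}. This is self-contained and elementary, at the cost of using more of the extrinsic structure.

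\textbf{Two points to make explicit.}

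First, restrict $f_R$ to one connected component $U_R$ of $X^{-1}(\{R<|X|<2R\})$ that crosses the middle sphere. Different sheets may be asymptotic to non-parallel hyperplanes, so a single $e$ need not be nearly tangent to all of them. As literally written on the whole annular preimage, the error $k^2\varphi\langle e,\nu\rangle^2$ could then be of order one on a set of volume comparable to $R^n$. With $\supp\varphi\Subset(1,2)$, the restricted function is still smooth and compactly supported.

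Second, concluding that $\nu$ is nearly constant on $U_R$ by integrating $|A|=o(1/R)$ requires an $O(R)$ bound on the intrinsic diameter of $U_R$. This follows from the extrinsic volume bound and the lower volume bound for intrinsic balls, via the maximal separated family in the proof of Lemma~\ref{lem:decay}. It also follows from the smooth convergence to a flat cone established there.

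Alternatively, you can avoid the annular bookkeeping altogether. Cut off plane waves by $|X-X(p_j)|/\rho_j$ on the component through $p_j$ of the preimage of the extrinsic ball of radius $2\rho_j$, where $p_j\to\infty$, $\rho_j\to\infty$ and $\rho_j\sup_{B_{3\rho_j}(p_j)}|A|\to0$. This needs only $|A|\to0$ at infinity.
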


The corollary follows from Lu--Zhou's theorem \cite{LZ}
and relative compactness of the curvature potential.
The Laplacian conclusion under finite total curvature
was also recorded in \cite[Remark~4.1]{DLZ}.

All metrics and volumes are induced on the abstract manifold;
extrinsic volumes count the multiplicity of the immersion.
Balls $B_R(p)$ without an ambient superscript are intrinsic.
The notation $x\to\infty$ means that $x$ leaves every compact subset
of $M$. A complete minimal immersion in Euclidean space without
boundary is noncompact, since its coordinate functions are harmonic.

Section~2 proves Theorem~\ref{thm:bernstein} and the local
curvature estimate. In Section~3, we establish the cohomological
and exterior volume estimates, derive curvature decay, and prove
Theorem~\ref{thm:index} and Corollary~\ref{cor:spectrum}.

\section{Sharp Bernstein rigidity and a local curvature estimate}

We first record that the stability inequality lifts to an arbitrary
Riemannian covering. If $\pi:\widehat M\to M$ is a covering and
$f\in\Cc(\widehat M)$, set
\[
 F(p)=\left(\sum_{q\in\pi^{-1}(p)}f(q)^2\right)^{1/2}.
\]
The function $F$ is compactly supported and locally Lipschitz.
Cauchy--Schwarz gives, almost everywhere,
\[
 |\nabla F|^2\leq
 \sum_{q\in\pi^{-1}(p)}|\nabla f(q)|^2.
\]
The potential integrals and the squared $L^2$ norms agree on the
two spaces. Approximation by smooth compactly supported functions
therefore shows that
\begin{equation}\label{eq:cover}
 Q_{\delta,\widehat M}(f)\geq Q_{\delta,M}(F).
\end{equation}
In particular, $\delta$-stability lifts to the covering.

\begin{proof}[Proof of Theorem~\ref{thm:bernstein}]
By decreasing the coefficient if necessary, it is enough to consider
$(n-2)/n<\delta<1$. First suppose that $|A|$ is bounded.
Lift the immersion to the universal cover $\widehat M$.
It is complete, two-sided, and $\delta$-stable by
\eqref{eq:cover}. In particular it is $(n-2)/n$-stable.
Theorem~1.1 and Corollary~1.1 of Cheng--Zhou \cite{CZ} imply
that $\widehat M$ has one end or is a catenoid. Their curvature
growth hypotheses are satisfied by bounded curvature: they require
$o(\log R)$ in dimension three and
$o(R^{(n-3)/2})$ in dimension at least four.
In either case $\widehat M$ has finitely many ends.

Since $\widehat M$ is simply connected, Proposition~1.10(b) of
Hong--Li--Wang \cite{HLW} gives
\[
 \Vol(B_R^{\widehat M}(\widehat p))\leq CR^n
 \qquad(R\geq1)
\]
for a finite constant $C$. Florit-Simon \cite[Theorem~1.1]{FS}
implies that the lifted immersion is proper and has extrinsic
Euclidean volume growth. In dimensions three and four,
\[
 \max\left\{\frac{n-2}{n},
             \frac{(n-2)^2}{4(n-1)}\right\}
 =\frac{n-2}{n}.
\]
Corollary~1.4 of \cite{HLW} now gives
$\widehat A\equiv0$, and hence $A\equiv0$ on $M$.

We remove the curvature bound by point-picking. If $M$ were
nonflat, choose $p\in M$ with $|A|(p)>0$ and let $R_j\to\infty$.
The continuous function
\[
 x\longmapsto |A|(x)\bigl(R_j-d_M(p,x)\bigr)
\]
attains its maximum on $\overline{B_{R_j}(p)}$. At a maximizing
point $q_j$, put
$b_j=|A|(q_j)$ and $s_j=R_j-d_M(p,q_j)$. Then
\begin{equation}\label{eq:point-picking}
 b_js_j\geq |A|(p)R_j\longrightarrow\infty,
 \qquad
 |A|\leq2b_j\quad\hbox{on }B_{s_j/2}(q_j).
\end{equation}
Dilate the immersion by $b_j$ about $X(q_j)$.
The resulting pointed domains have radii tending to infinity,
$|A_j|(q_j)=1$, and $|A_j|\leq2$.

The rescaled immersions have base point at the origin,
uniformly bounded second fundamental form, and intrinsic distance
from the base point to the boundary tending to infinity.
Proposition~8.2 of \cite{ChNotes} therefore gives a complete
pointed limit, with smooth convergence on compact subsets of
the abstract limit manifold. In these pointed parametrizations
the preimages of $q_j$ converge to $q_\infty$, so the normalization
$|A_j|(q_j)=1$ survives at $q_\infty$.

The unit normals converge after passing to a subsequence.
Every compactly supported test function on the limit can be
transported to the approximating domains, so the stability
inequality passes to the limit. Thus $M_\infty$ is complete,
two-sided, and $\delta$-stable, with $|A_\infty|\leq2$ and
$|A_\infty|(q_\infty)=1$, contradicting the bounded-curvature case.

Finally, $A\equiv0$ places the image in an affine hyperplane.
The immersion into that hyperplane is a local isometry from
a complete connected manifold. It is therefore a covering,
and the hyperplane is simply connected. Thus the immersion
is an isometry onto the hyperplane.
\end{proof}

\begin{proposition}\label{prop:curvature}
Fix $n\in\{3,4\}$ and $\delta>(n-2)/n$. There is a constant
$C=C(n,\delta)$ such that, if $M^n\to\R^{n+1}$ is a complete
two-sided minimal immersion without boundary and $\Omega\subset M$
is a $\delta$-stable open set, then
\begin{equation}\label{eq:local-curvature}
 |A|(p)\leq\frac{C}{\dist_M(p,M\setminus\Omega)},
 \qquad p\in\Omega.
\end{equation}
The right-hand side is zero when $\Omega=M$.
In particular, if $M$ is $\delta$-stable outside a nonempty
compact domain $K$, then
\begin{equation}\label{eq:exterior-curvature}
 |A|(x)\leq\frac{C}{d_M(x,K)}
 \qquad(x\in M\setminus K).
\end{equation}
\end{proposition}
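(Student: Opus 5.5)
The plan is a contradiction argument by blow-up, reducing to Theorem~\ref{thm:bernstein}, in the same way the point-picking step in its proof removed the curvature bound. Suppose no such constant exists. Then there are complete two-sided minimal immersions $X_j:M_j\to\R^{n+1}$, $\delta$-stable open sets $\Omega_j\subset M_j$ and points $p_j\in\Omega_j$ with
\[
 |A_j|(p_j)\,\dist_{M_j}(p_j,M_j\setminus\Omega_j)\geq j .
\]
When $\Omega_j=M_j$ we read the distance as $+\infty$. In that case Theorem~\ref{thm:bernstein} already gives $A\equiv0$, which explains the convention that the right-hand side is zero. So we may assume $d_j:=\dist(p_j,M_j\setminus\Omega_j)<\infty$, replacing $d_j$ by a large finite radius $R_j\to\infty$ if necessary.

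First, pick better points. On the closed intrinsic ball $\overline{B_{d_j}(p_j)}$ (compact by completeness), the function $x\mapsto |A_j|(x)\bigl(d_j-d(p_j,x)\bigr)$ attains its maximum at some $q_j$. Set $b_j=|A_j|(q_j)$ and $s_j=d_j-d(p_j,q_j)$. As in \eqref{eq:point-picking}:
\begin{itemize}
 \item $b_js_j\geq j$;
 \item $|A_j|\leq 2b_j$ on $B_{s_j/2}(q_j)$;
 \item $B_{s_j/2}(q_j)\subset B_{d_j}(p_j)\subset\Omega_j$ by the triangle inequality.
\end{itemize}
The last point holds because the open intrinsic ball of radius $d_j$ about $p_j$ misses $M_j\setminus\Omega_j$. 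Hence the ball $B_{s_j/2}(q_j)$ is $\delta$-stable, since $\delta$-stability of an open set passes to open subsets.

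Next, rescale by $b_j$ about $X_j(q_j)$. The rescaled pieces have the following properties:
\begin{itemize}
 \item intrinsic radius $b_js_j/2\to\infty$;
 \item $|\widetilde A_j|\leq2$;
 \item $|\widetilde A_j|(q_j)=1$;
 \item they remain $\delta$-stable, because $Q_\delta$ is scale invariant: both terms scale by the same power of $b_j$.
\end{itemize}
Apply Proposition~8.2 of \cite{ChNotes}, exactly as in the proof of Theorem~\ref{thm:bernstein}. This gives a complete pointed limit $M_\infty$ with smooth convergence on compact sets of the abstract limit, and $|A_\infty|(q_\infty)=1$. Two-sidedness passes to the limit through convergence of the unit normals. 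Any $f\in\Cc(M_\infty)$ is transported to the approximating domains, so $Q_\delta(f)\geq0$ on $M_\infty$. Theorem~\ref{thm:bernstein} then forces $M_\infty$ to be flat, which contradicts $|A_\infty|(q_\infty)=1$.

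The main point needing care is that the sets $\Omega_j$ vary and are only open, while the limit must be complete and globally $\delta$-stable. This is handled by working only inside the balls $B_{s_j/2}(q_j)$, whose rescaled radii diverge. The compactness result requires only bounded curvature on balls whose distance to the boundary goes to infinity, not completeness of the pieces. Since $C$ is obtained by contradiction over all immersions, it depends only on $n$ and $\delta$.

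For the exterior statement \eqref{eq:exterior-curvature}, take $\Omega=M\setminus K$, which is open and $\delta$-stable. For $x\in M\setminus K$ we have $\dist_M(x,M\setminus\Omega)=d_M(x,K)$, so \eqref{eq:local-curvature} gives the bound directly. Points on $\partial K$, if $K$ is taken closed, give a vacuous bound.
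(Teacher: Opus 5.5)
Your proposal is correct and follows the paper's own proof. Both argue by contradiction, run the point-picking of \eqref{eq:point-picking} in an intrinsic ball contained in $\Omega_j$ (the paper likewise passes to finite radii $r_j$ with $a_jr_j\to\infty$), rescale, extract a complete two-sided $\delta$-stable nonflat limit via \cite[Proposition~8.2]{ChNotes} to contradict Theorem~\ref{thm:bernstein}, and obtain \eqref{eq:exterior-curvature} by taking $\Omega=M\setminus K$; your checks that $B_{s_j/2}(q_j)\subset B_{d_j}(p_j)\subset\Omega_j$ and that $Q_\delta$ is scale invariant spell out details the paper leaves implicit.
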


\begin{proof}
Otherwise, there are examples $(M_j,\Omega_j,p_j)$ with
$a_j=|A_j|(p_j)>0$ and
\[
 a_j\dist_{M_j}(p_j,M_j\setminus\Omega_j)\longrightarrow\infty.
\]
Choose finite radii $r_j$ below these distances with $a_jr_j\to\infty$,
and apply the point-picking construction on $B_{r_j}(p_j)$.
After dilation, the pointed domains have radii tending to infinity,
$|A_j|\leq2$, and $|A_j|(q_j)=1$.
The preceding compactness argument gives a complete, two-sided,
$\delta$-stable nonflat limit, contrary to
Theorem~\ref{thm:bernstein}. Taking $\Omega=M\setminus K$
gives \eqref{eq:exterior-curvature}.
\end{proof}

\section{Finite index and total curvature}

We use the $L^1$ Michael--Simon inequality
\cite[Theorem~2.1, p.~368]{MS}. On a smooth minimal immersion
without boundary,
its form for a nonnegative function $h\in C_c^1(M)$ is
\begin{equation}\label{eq:ms-l1}
 \left(\int_M h^{n/(n-1)}\dd\right)^{(n-1)/n}
 \leq I_n\int_M|\nabla h|\dd.
\end{equation}
We explain the application to abstract immersed domains, since
the source states its theorem for ambient test functions.
For $\xi\in\R^{n+1}$, put
\[
 S_t(\xi)=X^{-1}(B_t^{\R^{n+1}}(\xi)),\qquad
 P_\xi(t)=\int_{S_t(\xi)}h\dd,\qquad
 E_\xi(t)=\int_{S_t(\xi)}|\nabla h|\dd.
\]
These integrals are finite because the integrands have compact
abstract support. Integrating the divergence of
$h\lambda(t-|X-\xi|)(X-\xi)^T$ on $M$, with the smooth radial
cutoffs of \cite[Lemma~2.2]{MS}, uses only
$\diver_M(X-\xi)^T=n$ and $|\nabla|X-\xi||\leq1$.
It gives the same radial inequality as in that lemma; after
approximating the ball indicator, it reads in distributions
\[
 -\frac{d}{dt}\bigl(t^{-n}P_\xi(t)\bigr)
 \leq t^{-n}E_\xi(t).
\]
A local immersion chart at each $p\in M$ gives
$\liminf_{t\downarrow0}t^{-n}P_{X(p)}(t)\geq\omega_n h(p)$,
where $\omega_n$ is the volume of the unit $n$-ball.
Thus Lemma~2.3 and the covering argument in the proof of
\cite[Theorem~2.1]{MS} apply to $X(\{h\geq1\})$:
the enlarged balls cover all abstract points of $\{h\geq1\}$,
and disjoint ambient balls have disjoint full preimages.
The ensuing superlevel-set integration proves \eqref{eq:ms-l1}
with the same dimension-dependent constant. Every integral counts
the abstract sheets, including coincident sheets; neither
embeddedness nor properness is required.

For $n\geq3$, applying \eqref{eq:ms-l1} to
$h=|f|^{2(n-1)/(n-2)}$ and using Cauchy--Schwarz gives
\begin{equation}\label{eq:sobolev}
 \left(\int_M|f|^{2n/(n-2)}\dd\right)^{(n-2)/n}
 \leq S_n\int_M|\nabla f|^2\dd,
 \qquad f\in\Cc(M).
\end{equation}
For any smooth relatively compact domain $D\subset M$,
approximate $\chi_D$ by nonnegative functions supported in a
fixed compact tubular enlargement of $\overline D$.
Their gradient integrals tend to the induced boundary area,
so \eqref{eq:ms-l1} gives
\begin{equation}\label{eq:isoperimetric}
 \Vol(D)^{(n-1)/n}\leq I_n\Vol_{n-1}(\partial D).
\end{equation}
Both the domain volume and boundary area are measured on $M$.
We also use the eigenvalue bound following from
\eqref{eq:sobolev}:
\begin{equation}\label{eq:clr}
 \ind\left(\int_M|\nabla f|^2\dd-\int_M Vf^2\dd\right)
 \leq C_n\int_M V^{n/2}\dd
 \qquad(0\leq V\in L^{n/2}(M)).
\end{equation}
This is Frank--Lieb--Seiringer \cite[Theorem~2.1]{FLS} applied
to the closed nonnegative form $t[f]=\int_M|\nabla f|^2\dd$
on $H^1(M)\subset L^2(M,d\mu)$, with their parameters
$\kappa=n/2>1$, $q=2n/(n-2)$, $\omega\equiv1$, and
$S=S_n^{-1}$. The induced measure is sigma-finite.
Completeness gives the form core $C_c^\infty(M)$;
splitting real and imaginary parts, taking absolute values,
and truncating nonnegative functions at one verify their
Assumption~2.1. Equation~\eqref{eq:sobolev} extends to the
form domain by closure. Truncation of $V$ and that inequality
make $V\in L^{n/2}$ infinitesimally form bounded.
The min--max principle and form-core approximation identify
the negative eigenvalue count, with multiplicity, with the
compact-test index in \eqref{eq:clr}.

\begin{lemma}\label{lem:cohomology}
Let $n\geq3$. Suppose that a complete, connected, two-sided minimal
immersion $M^n\to\R^{n+1}$ without boundary is $\delta$-stable
outside a compact set and
$\delta>((n-1)/n)^2$. Then
\[
 \dim\mathcal H_{(2)}^1(M)<\infty,
 \qquad \dim H_c^1(M;\R)<\infty.
\]
Here $\mathcal H_{(2)}^1(M)$ denotes the space of
square-integrable harmonic one-forms.
\end{lemma}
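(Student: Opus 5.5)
We plan to combine a localized version of Fu's Bochner--Kato argument with the finite-dimensionality principle of Li--Wang \cite{LW02}. Let $\omega\in\mathcal H^1_{(2)}(M)$. Since $M$ is complete and $\omega\in L^2$, it is closed and coclosed. The Bochner formula for one-forms on a minimal hypersurface, with $\Ric\geq-\frac{n-1}{n}|A|^2$ (Gauss equation plus $\operatorname{tr}A=0$), and the refined Kato inequality $|\nabla\omega|^2\geq\frac{n}{n-1}|\nabla|\omega||^2$ for harmonic one-forms give, for $u=|\omega|$,
\[
 u\Delta u\geq\frac1{n-1}|\nabla u|^2-\frac{n-1}{n}|A|^2u^2
\]
in the weak sense. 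Multiply by $\varphi^2 u^{2\beta-2}$, or test the $\delta$-stability inequality with $f=\varphi u^{\alpha}$ and $\varphi$ supported in the stable exterior $M\setminus K$. The choice of $\alpha$ as in Fu then yields a positive coefficient in front of $\int\varphi^2 u^{2\alpha-2}|\nabla u|^2$ exactly when $\delta>((n-1)/n)^2$. For $\alpha=1$ this reads
\[
 \epsilon\int_M\varphi^2|\nabla u|^2\dd\le C\int_M u^2|\nabla\varphi|^2\dd ,
\]
or the analogous estimate with $|A|^2u^2$ bounded by $\delta^{-1}$ times gradient terms; which $\alpha$ is needed will be dictated by Fu's computation. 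Here $\varphi=\psi\eta_R$, where $\psi$ vanishes on a neighbourhood of $K$ and equals one outside a larger compact set $K'$, and $\eta_R$ is a standard cutoff at scale $R$. Letting $R\to\infty$ and using $u\in L^2$ kills the far term. We conclude
\[
 \int_{M\setminus K'}\bigl(|\nabla u|^2+|A|^2u^2\bigr)\dd\le C\int_{K'\setminus K}u^2\dd .
\]

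Next we obtain an $L^2$ bound on all of $M$ from the bound on a compact set. By the Sobolev inequality \eqref{eq:sobolev} applied to $\psi u$ (justified by the same cutoff limit), $\|u\|_{L^{2n/(n-2)}(M\setminus K')}$ is bounded by $C\|u\|_{L^2(K'')}$ for a fixed compact $K''\supset K'$. Together with local elliptic (mean value) estimates for $u$, which satisfies a subsolution inequality with smooth bounded coefficients on compact sets, this gives
\[
 \sup_{K'}u\le C\|u\|_{L^2(K'')},
\]
and hence a Li-type inequality $\int_{K''}u^2\le C\int_{K''}u^2$. Rather than aiming for that, the cleaner route is the standard lemma: if a finite-dimensional subspace $H$ of harmonic forms satisfies $\sup_{K''}|\omega|^2\le C\|\omega\|_{L^2(K'')}^2/\ldots$, then $\dim H\le C'$. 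Concretely, we will show that every $\omega\in\mathcal H^1_{(2)}(M)$ vanishing on the compact set $K''$ must vanish identically, by unique continuation. Since restriction to $K''$ is injective into a space where the mean-value estimate $\sup_{K''}|\omega|\le C\|\omega\|_{L^2(K''')}$ holds uniformly, Li's lemma (\cite{LW02}: $\dim H\le C\operatorname{rank}\cdot\Vol$ bound via choosing a form maximizing at a point) bounds $\dim\mathcal H^1_{(2)}(M)$. We expect this step to be the main obstacle: the $L^2$ norm on all of $M$ must be controlled by the $L^2$ norm on a fixed compact set uniformly over the space, which the exterior estimate above provides.

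Finally, $\dim H^1_c(M;\R)<\infty$ follows from the injection of the image of $H^1_c(M)\to H^1(M)$ into $\mathcal H^1_{(2)}(M)$, by Hodge--de Rham theory on complete manifolds. It remains to bound the kernel, which by the long exact sequence is the cokernel of $H^0(M)\to H^0(\text{ends})$, that is, the number of ends minus one. The number of ends is bounded because the Sobolev inequality \eqref{eq:sobolev} makes every end nonparabolic. By Li--Tam, each additional nonparabolic end then produces a bounded harmonic function with finite Dirichlet energy, whose differential lies in $\mathcal H^1_{(2)}$, and these differentials are independent. Hence the number of ends is at most $1+\dim\mathcal H^1_{(2)}(M)$, and both dimensions are finite.
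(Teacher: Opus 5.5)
Your argument is correct in outline. It shares the paper's analytic core but closes both conclusions differently. The core is Bochner--Kato for $|\omega|$ with exponent one, tested against exterior $\delta$-stability with $\eta=\psi\eta_R$, followed by \eqref{eq:sobolev}; this yields \eqref{eq:local-controls-global}. You can drop the hedge on $\alpha$: $\alpha=1$ gives exactly the threshold $((n-1)/n)^2$, and it is the exponent for which $\omega\in L^2$ kills the cutoff term.

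For $\dim\mathcal H^1_{(2)}(M)<\infty$ you use unique continuation, the mean-value inequality and Li's lemma, which gives an explicit dimension bound. The paper instead takes an $L^2(K_1)$-orthonormal sequence and contradicts Rellich compactness, which gives finiteness only.

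For $H^1_c(M;\R)$ you split the map $H^1_c\to H^1$ into two parts. Its image injects into $\mathcal H^1_{(2)}$ by Hodge--de Rham. Its kernel has dimension $e(M)-1$, which you bound using nonparabolicity of every end and Li--Tam. Together these give $\dim H^1_c\le 2\dim\mathcal H^1_{(2)}$. The paper builds a single map $[\alpha]\mapsto\alpha-du$, where $u$ is the projection of $\alpha$ onto $d\mathcal D^{1,2}(M)$. Its injectivity uses only $u\in L^q$ and the infinite volume of the unbounded complementary components. That route is self-contained, avoids the theory of nonparabolic ends, and gives the sharper bound $\dim H^1_c\le\dim\mathcal H^1_{(2)}$. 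For a kernel class $[dv]$, the function $v-u$ plays the role of your Li--Tam harmonic functions. Your route is the modular, classical version of the same mechanism, and it makes the bound on the number of ends explicit.

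Tidy your middle paragraph. The displayed ``Li-type inequality'' $\int_{K''}u^2\le C\int_{K''}u^2$ is vacuous. Also, the exterior estimate does not give control of $\|\omega\|_{L^2(M)}$; it controls only $\|\omega\|_{L^q(M)}$ with $q=2n/(n-2)$. Li's lemma does not need global $L^2$ control. It needs $\|\omega\|_{L^2(K''')}\le C\|\omega\|_{L^2(K'')}$ for a fixed compact $K'''\supset K''$, and this follows from the $L^q$ bound by H\"older's inequality on $K'''$.
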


\begin{proof}
The references to \cite{Fu,LW02} in the introduction describe
antecedents of the argument. We prove the localized estimate
and the compactly supported cohomology conclusion here.
Let $\omega\in\mathcal H_{(2)}^1(M)$ and $f=|\omega|$.
The trace-free property of the shape operator gives
$\Ric_M\geq-\frac{n-1}{n}|A|^2g$. Bochner's formula
and the refined Kato inequality yield, in the weak sense,
\begin{equation}\label{eq:bochner}
 f\Delta f\geq\frac1{n-1}|\nabla f|^2
                    -\frac{n-1}{n}|A|^2f^2.
\end{equation}
For a compactly supported cutoff $\eta$ in the stable
exterior, write
\[
 \begin{aligned}
 X_\eta&=\int\eta^2|\nabla f|^2,&
 Y_\eta&=\int f^2|\nabla\eta|^2,\\
 Z_\eta&=\int\eta f\langle\nabla\eta,\nabla f\rangle,&
 P_\eta&=\int|A|^2\eta^2f^2 .
 \end{aligned}
\]
Integration of \eqref{eq:bochner} and stability give
\[
 \frac n{n-1}X_\eta\leq\frac{n-1}{n}P_\eta-2Z_\eta,
 \qquad
 \delta P_\eta\leq X_\eta+2Z_\eta+Y_\eta.
\]
Consequently,
\begin{equation}\label{eq:coercive}
 \left(\frac n{n-1}-\frac{n-1}{n\delta}\right)X_\eta
 \leq
 2\left(\frac{n-1}{n\delta}-1\right)Z_\eta
 +\frac{n-1}{n\delta}Y_\eta.
\end{equation}
The coefficient on the left is positive precisely when
$\delta>((n-1)/n)^2$.

Since $|Z_\eta|\leq(X_\eta Y_\eta)^{1/2}$, Young's
inequality gives
\begin{equation}\label{eq:form-caccioppoli}
 \int|\nabla(\eta f)|^2
 \leq C(n,\delta)\int f^2|\nabla\eta|^2.
\end{equation}
The calculation is justified at the zeros of $\omega$
by replacing $|\omega|$ with
$(|\omega|^2+\varepsilon^2)^{1/2}$ on compact supports
and then letting $\varepsilon\downarrow0$.

Choose a fixed cutoff $\theta$ vanishing near the unstable
compact set and equal to one outside a larger compact domain.
Let $K_0\Subset\operatorname{int}K_1$ be fixed compact smooth
domains containing $\supp(1-\theta)$ in $\operatorname{int}K_0$.
Apply \eqref{eq:form-caccioppoli} and \eqref{eq:sobolev} with
$\eta=\theta\chi_R$, where $\chi_R=1$ on $B_R(p)$,
$\supp\chi_R\subset B_{2R}(p)$, and $|\nabla\chi_R|\leq2/R$.
For $q=2n/(n-2)$, this gives
\[
\|\theta\chi_R f\|_{L^q(M)}^2
\leq C\int_{K_0}f^2|\nabla\theta|^2\dd
+\frac{C}{R^2}\|\omega\|_{L^2(M)}^2.
\]
Since $\omega\in L^2(M)$, Fatou's lemma yields
$\|\theta f\|_{L^q(M)}\leq C\|\omega\|_{L^2(K_0)}$.
Interior estimates for the Hodge equation on $K_1$ give
$\|\omega\|_{L^q(K_0)}\leq C\|\omega\|_{L^2(K_1)}$.
Combining the two bounds, we obtain
\begin{equation}\label{eq:local-controls-global}
\|\omega\|_{L^q(M)}
\leq C\|\omega\|_{L^2(K_1)}.\end{equation}
The domains and constants are independent of $\omega$.

If $\mathcal H_{(2)}^1(M)$ were infinite dimensional,
unique continuation would allow a sequence orthonormal
for the $L^2(K_1)$ inner product.
Equation \eqref{eq:local-controls-global} bounds its
$L^2$ norms on any fixed larger compact domain.
Interior elliptic estimates and Rellich compactness
would then give a subsequence converging in $L^2(K_1)$,
a contradiction.

We give a direct injection of $H_c^1(M;\R)$ into this
finite-dimensional space. Let $\mathcal D^{1,2}(M)$
be the completion of $\Cc(M)$ for the Dirichlet norm.
By \eqref{eq:sobolev} it embeds in $L^q(M)$.
For a smooth compactly supported closed one-form
$\alpha$, Hilbert-space projection gives
$u\in\mathcal D^{1,2}(M)$ satisfying
\[
 \int_M\langle du,dv\rangle\dd
 =\int_M\langle\alpha,dv\rangle\dd
 \qquad(v\in\mathcal D^{1,2}(M)).
\]
The form $\alpha-du$ is closed, coclosed, and in $L^2$,
hence smooth and harmonic. If $\alpha$ is replaced by $\alpha+dv$ with $v\in\Cc(M)$,
then $u$ is replaced by $u+v$. Thus this linear map depends
only on the class of $\alpha$ in $H_c^1(M;\R)$.

Suppose $\alpha-du=0$. Outside a compact smooth domain
containing $\supp\alpha$, the function $u$ is constant
on each complementary component. Every component
with noncompact closure has infinite volume: it contains infinitely
many disjoint intrinsic unit balls away from the
boundary, and the minimal immersion gives a uniform
positive lower bound for their volumes.
Since $u\in L^q$, the constants on these components
vanish. There are only finitely many relatively compact
components, because the boundary of the domain is a
compact smooth manifold. Filling those components
shows that $u$ has compact support. Elliptic regularity
makes $u$ smooth, so $[\alpha]=0$ in $H_c^1(M;\R)$.
The injection is proved.
\end{proof}

\begin{lemma}\label{lem:homology}
Let $n\geq1$ and let $M^n$ be a connected, oriented, noncompact
manifold without boundary with $\dim H_c^1(M;\R)=b<\infty$.
Let $\Omega\Subset M$
be a connected smooth domain such that every component
of $M\setminus\overline\Omega$ has noncompact closure.
Then $\partial\Omega$ has at most $b+1$ connected
components.
\end{lemma}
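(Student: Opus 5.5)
Write $\partial\Omega=\Sigma_1\sqcup\cdots\sqcup\Sigma_k$ with each $\Sigma_i$ a closed connected hypersurface. The plan is to construct $k$ compactly supported closed one-forms and show that their classes span a subspace of $H_c^1(M;\R)$ of dimension at least $k-1$; this gives $k-1\leq b$. Since $\Omega$ is connected and relatively compact, each complementary component $U$ of $M\setminus\overline\Omega$ has boundary equal to a union of some of the $\Sigma_i$, and each $\Sigma_i$ lies in the boundary of exactly one such $U$ (locally $\Sigma_i$ has $\Omega$ on one side and $M\setminus\overline\Omega$ on the other). For each $i$, choose a smooth function $\phi_i$ on $M$ with the following properties: it equals $1$ on a thin collar of $\Sigma_i$ inside $\Omega$ and on the far side of that collar up to $\Sigma_i$, it equals $0$ elsewhere in $\Omega$ away from the collar, and it is locally constant outside a compact neighbourhood of $\overline\Omega$. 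The cleaner construction is to take a collar $\Sigma_i\times(-1,1)$ and set $\alpha_i=\rho(t)\,dt$ there, with $\rho$ a bump function of integral one. Then $\alpha_i$ is a closed one-form with compact support, namely the Poincaré dual of $\Sigma_i$.

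Next comes the pairing. Since $M$ is oriented, Poincaré duality gives a nondegenerate pairing between $H_c^1(M;\R)$ and $H^{n-1}(M;\R)$. It is more convenient, though, to pair with $H_{n-1}$ through intersection, or to use the following direct argument. Suppose $\sum c_i\alpha_i=dg$ with $g\in\Cc(M)$. Then $g$ is locally constant off the collars, so it is constant on $\Omega$ minus the collars, say equal to $a$ (this uses that $\Omega$ is connected). Crossing $\Sigma_i$ changes $g$ by $c_i$, so $g=a+c_i$ just outside $\Sigma_i$ in the component $U\ni\Sigma_i$. On $U$, the function $g$ is locally constant outside the collars, and $U$ is connected, so $g$ is constant there. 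But $U$ has noncompact closure and $g$ has compact support, so this constant is $0$. Hence $c_i=-a$ for all $i$. The kernel of the map $\R^k\to H_c^1(M;\R)$, $(c_i)\mapsto[\sum c_i\alpha_i]$, is therefore at most one-dimensional, spanned by $(1,\dots,1)$. Indeed $\sum\alpha_i=d(\text{the function equal to }1\text{ on }\Omega)$, up to cutoffs, so this kernel is realised. Therefore $k-1\leq b$.

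The main obstacle is making the "locally constant and connectedness" bookkeeping precise. The point to verify is that the complement of the collars in $U$ stays connected and that its closure is still noncompact; both follow because the collars are thin tubular neighbourhoods of the boundary. Orientation of $M$, or two-sidedness of each $\Sigma_i$, is needed to define the collar coordinate $t$ consistently. Since $\Sigma_i=\partial\Omega\cap\partial U$ separates locally, the sign is fixed by taking $t$ increasing from $\Omega$ into $U$, so actually no orientation hypothesis is used beyond this.
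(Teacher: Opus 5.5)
Your proof is correct. It is the de Rham dual of the paper's homological argument, and it is more elementary.

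The paper works in homology. Using a constant-multiplicity argument for compact $n$-chains on a triangulation compatible with $\partial\Omega$, it shows that the kernel of $H_{n-1}(\partial\Omega;\R)\to H_{n-1}(M;\R)$ is spanned by $[\Sigma_1]+\cdots+[\Sigma_N]$. It then invokes Poincar\'e duality $H_{n-1}(M;\R)\cong H_c^1(M;\R)$, which is the only place orientability enters.

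You instead map $\R^k$ into compactly supported de Rham cohomology through the collar forms $\rho(t)\,dt$, which are Poincar\'e duals of the $\Sigma_i$, and you compute the kernel directly. A compactly supported primitive $g$ plays the role of the paper's bounding chain. Its locally constant values off the collars play the role of the multiplicities. The hypothesis that every complementary component has noncompact closure forces those values to vanish there, exactly as in the paper.

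Your route buys three things:
\begin{itemize}
\item It avoids triangulations and Poincar\'e duality altogether.
\item As you note, it uses only that each $\Sigma_i$ is two-sided, which is automatic for a boundary component. So it proves the lemma without the orientation hypothesis.
\item It fits the representation of $H_c^1$ by smooth compactly supported closed one-forms already used in Lemma~\ref{lem:cohomology}.
\end{itemize}
The paper's version is shorter if one takes standard algebraic topology for granted.

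You leave two points as assertions, and both are routine. First, removing thin collars preserves the connectedness of $\Omega$ and of each complementary component $U$: push paths out along the collar coordinate. Second, $U$ minus the collars is still not relatively compact, because the removed set has compact closure.

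The preliminary description of the functions $\phi_i$ and the remark about the Poincar\'e pairing play no role in the argument and can be deleted.
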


\begin{proof}
Write $\partial\Omega=\Sigma_1\cup\cdots\cup\Sigma_N$,
with the boundary orientations. The kernel of
\[
 H_{n-1}(\partial\Omega;\R)\longrightarrow
 H_{n-1}(M;\R)
\]
is one dimensional, generated by
$[\Sigma_1]+\cdots+[\Sigma_N]$.
To see this, triangulate $M$ compatibly with
$\partial\Omega$. A compact top-dimensional chain
whose boundary lies in $\partial\Omega$ has constant
multiplicity in each component of
$M\setminus\partial\Omega$. Its multiplicity is zero
in every component with noncompact closure, since the chain is
compactly supported, and is a single constant in the
connected domain $\Omega$. Its boundary is therefore
a multiple of the oriented sum above.
It follows that $N-1\leq\dim H_{n-1}(M;\R)$.
Poincar\'e duality identifies the latter dimension
with $\dim H_c^1(M;\R)=b$.
\end{proof}

\begin{proposition}\label{prop:volume}
Let $n\in\{3,4\}$ and $\delta>(n-2)/n$. Suppose that
$X:M^n\to\R^{n+1}$ is a complete, connected, two-sided
minimal immersion without boundary which is
$\delta$-stable outside a compact set. If
$\dim H_c^1(M;\R)<\infty$, then for every $p\in M$
there is $C<\infty$ such that
\[
 \Vol(B_R(p))\leq CR^n\qquad(R\geq1).
\]
In particular, $X$ is proper and has extrinsic
Euclidean volume growth.
\end{proposition}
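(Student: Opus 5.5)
The approach has three parts. First build $\mu$-bubble separators in annular bands of the stable exterior. Then bound their areas using the Hong--Li--Wang estimates, and count their components with Lemma~\ref{lem:homology}. Finally convert the area bound into volume growth with the isoperimetric inequality \eqref{eq:isoperimetric}.

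\textbf{Setup.} Fix a compact domain $K$ containing the unstable set and a point $p$. For large $R$, consider the band $B_{2R}(p)\setminus B_{R}(p)$, which lies in the $\delta$-stable exterior once $R$ exceeds a fixed size. Stability there gives a positive solution $u$ of $\Delta u+\delta|A|^2u\le 0$ on the band, obtained by exhaustion and Harnack. Following \cite[Section~5]{HLW}, conformally change the metric by a power of $u$. In that metric, solve a weighted $\mu$-bubble problem between the inner and outer boundaries, with weight built from a function of $d_M(p,\cdot)$ normalized to the band width $R$.

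\textbf{Area bound per component.} The second variation of the $\mu$-bubble, combined with the $\delta$-stability inequality and the HLW choice of exponents, should bound the conformal area of each component $\Sigma$ of the separator by $C R^{n-1}$. The range $\delta>(n-2)/n$ with $n\in\{3,4\}$ is exactly where their spectral Ricci/biRic argument closes. The curvature estimate \eqref{eq:exterior-curvature}, $|A|\le C/R$ on the band, together with Harnack for $u$ on scale-$R$ balls, should let me compare the conformal area with the induced area. That comparison gives $\Vol_{n-1}(\Sigma)\le CR^{n-1}$ with $C$ independent of $R$.

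\textbf{Counting components.} Let $\Omega_R$ be the region enclosed by the separator containing $B_R(p)$; take its component containing $p$ and fill in all relatively compact components of the complement. The boundary of the filled region lies in the original separator, so each boundary component has area at most $CR^{n-1}$. Every complementary component now has noncompact closure, so Lemma~\ref{lem:homology}, applied on the oriented manifold $M$ (two-sided in $\R^{n+1}$), bounds the number of boundary components by $1+\dim H_c^1(M;\R)$. Hence $\Vol_{n-1}(\partial\Omega_R)\le C'R^{n-1}$, and \eqref{eq:isoperimetric} gives $\Vol(B_R(p))\le\Vol(\Omega_R)\le CR^n$ for large $R$. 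Smaller $R\ge1$ are handled by compactness.

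\textbf{Properness and extrinsic growth.} These follow from Florit-Simon \cite[Theorem~1.1]{FS}.

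\textbf{Main obstacle.} The hardest step is the per-component area bound. It requires reproducing the HLW conformal $\mu$-bubble inequality in a band rather than on all of $M$, with the weight's boundary terms controlled at scale $R$, and converting conformal area to induced area uniformly. My first attempt would use the local Harnack inequality for $u$ plus \eqref{eq:exterior-curvature} to make the conformal factor comparable to a constant on the separator. If that fails, I would instead normalize $u$ at a point and absorb the oscillation into the weight function.
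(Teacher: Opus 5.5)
\textbf{There is a genuine gap, in the per-component area bound.} The problem is more basic than the conformal-to-induced comparison you flag: in the metric you propose, the separator's stability inequality has no positive term at all.

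Suppose the $\mu$-bubble is built in $g$, or in $g$ conformally changed by a power of $u$ with $-\Delta u\geq\delta|A|^2u$. After substituting $\psi=u^{-k/2}\phi$, the potential consists only of three kinds of terms. The first is $k\delta|A|^2$. The second is a Gauss-equation term: $\frac12\Scal_g=-\frac12|A|^2$ for $n=3$, and a bi-Ricci term quadratic in $A$ for $n=4$. The third is $\frac12h^2+\nu(h)$. Since $\nu$ is uncontrolled, you can only use $\frac12h^2-|\nabla h|$. This is $\leq0$ wherever $h=0$, and $h$ must change sign between the two edges. On a band of width $W$ it is at best $\geq -cW^{-2}$.

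For $n=3$ and $k=2$ you therefore get only
$2\int_\Sigma|\nabla\phi|^2\geq\int_\Sigma(-K_\Sigma-cR^{-2})\phi^2$. Gauss--Bonnet with $\phi\equiv1$ then gives no upper bound on $\Vol_2(\Sigma)$. The estimate $|A|\leq C/R$ bounds the potential but cannot make it positive. Chaining Harnack for $u$ along a separator would also fail: it needs a bound on the number of scale-$R$ balls covering the separator, which is essentially the volume growth you are trying to prove.

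\textbf{The missing idea is the Hong--Li--Wang conformal metric.} The conformal change in \cite[Section~5]{HLW} is not by a power of $u$; that change only turns the weighted functional into an unweighted one for existence and regularity. It is $\tg=r^{-2}g$, where $r=|X-z|$ and $z\notin X(M)$. In this metric,
$Q_\delta(r^{-(n-2)/2}\psi)=\int(|\tgrad\psi|^2-V\psi^2)\td$ with $V=\delta r^2|A|^2-\frac{n(n-2)}{2}+\frac{n^2-4}{4}s$ and $s=|\nabla r|^2\leq1$. The curvature of $\tg$ also acquires constants: $\widetilde\Scal=-r^2|A|^2+12-10s$ for $n=3$, and $\widetilde\biRic\geq7-5s-\frac56r^2|A|^2$ for $n=4$.

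Take $u>0$ with $-\tDelta u\geq Vu$; this is $r^{(n-2)/2}$ times your function. With $k=2$, respectively $k=5/3$, you get $-2\tDelta u/u+\frac12\widetilde\Scal\geq\frac12$, respectively $-\frac53\tDelta u/u+\widetilde\biRic\geq\frac13$, for $\delta$ in the stated range. The weight $h$ then only has to absorb $-\kappa/2$. This yields the scale-free bounds $\Vol_{2,\tg}(\Sigma)\leq16\pi$ (by Gauss--Bonnet) and $\Vol_{3,\tg}(\Sigma)\leq24\sqrt3\,|\Sph^3|$ (by Antonelli--Xu). The conversion to $g$ needs no Harnack: $\Vol_{n-1,g}(\Sigma)=\int_\Sigma r^{n-1}\,d\widetilde\sigma\leq CR^{n-1}$ because $r\leq CR$ on the band.

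\textbf{The band also has to change.} Since $\tg$ is scale invariant, the band needs a fixed $\tg$-width exceeding $4\pi/\sqrt\kappa$, that is, a multiplicative range $e^L$ in $\rho$. Your band $B_{2R}(p)\setminus B_R(p)$ can have $\tg$-width close to $\log 2$. The paper instead uses a $\tg$-collar of $\partial D_R$ with $B_{\Lambda R}(p)\subset D_R$ and $\Lambda>2e^L$.

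Your remaining steps match the paper: filling relatively compact complementary components, Lemma~\ref{lem:homology}, the isoperimetric inequality, and Florit-Simon.
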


\begin{proof}
We construct compact domains containing $B_R(p)$ whose boundaries
have area at most $CR^{n-1}$. We first recall the conformal
calculation and the area bound for each separator component from
\cite[Section~5]{HLW}. We then construct the separators in a
compact band in the stable exterior and use
Lemma~\ref{lem:homology} to bound their number.

Choose $z\in\R^{n+1}\setminus X(M)$, and put
\[
 r=|X-z|>0,\qquad \tg=r^{-2}g,\qquad
 s=|\nabla r|_g^2\leq1.
\]
Such a point $z$ exists since the smooth immersed
image has zero $(n+1)$-dimensional Lebesgue measure.
For a compactly supported function $\psi$ in the
stable exterior, direct calculation gives
\begin{equation}\label{eq:conformal-form}
 Q_\delta\bigl(r^{-(n-2)/2}\psi\bigr)
 =\int_M\bigl(|\tgrad\psi|^2-V\psi^2\bigr)\td,
 \qquad
 V=\delta r^2|A|^2-\frac{n(n-2)}2+\frac{n^2-4}{4}s.
\end{equation}
For example,
$\tDelta\log r=n(1-s)$; expanding the Dirichlet
term and integrating its cross term gives
\eqref{eq:conformal-form}. Compare \cite[Proposition~5.3]{HLW}.

The positive-supersolution characterization of
nonnegative Schr\"odinger forms, applied by exhaustion
(cf.\ \cite[Proposition~2.5]{ChNotes}), supplies on every
component of the stable exterior, a smooth function
$u>0$ such that
\begin{equation}\label{eq:positive}
 -\tDelta u\geq Vu.
\end{equation}
Only this exterior function will be used.

We first estimate a closed component of a weighted separator;
the compact band and the minimizer will be constructed below.
All quantities in this calculation use $\tg$.
In the band, consider the functional
\begin{equation}\label{eq:bubble-functional}
 \mathcal A_k(E)
 =\int_{\partial^*E}u^k\,d\widetilde\sigma
  -\int(\chi_E-\chi_{E_0})hu^k\,d\widetilde\mu,
\end{equation}
where the perimeter and volume terms are taken relative to the band.
The competitors are sets of finite perimeter fixed near its two
edges, and $E_0$ has the same prescribed behavior.
Let $\Sigma$ be a closed smooth component of the
free boundary of a minimizer. Denote its outer
normal by $\nu$, set $B(Y,Z)=\tg(\widetilde\nabla_Y\nu,Z)$,
and let $H=\operatorname{tr}B$.
With $v=\log u$, the first and second variations
are
\begin{equation}\label{eq:first-variation}
 H=h-k\nu(v)
\end{equation}
and
\begin{equation}\label{eq:second-variation}
 0\leq\int_\Sigma u^k\left\{
 |\nabla_\Sigma\psi|^2-
 \bigl(|B|^2+\widetilde\Ric(\nu,\nu)
       -k\widetilde{\operatorname{Hess}}v(\nu,\nu)
       +\nu(h)\bigr)\psi^2\right\}.
\end{equation}
These are local variational identities; compare
\cite[Lemma~5.2]{HLW}.

Substitute $\psi=u^{-k/2}\phi$ in
\eqref{eq:second-variation}, use
\eqref{eq:first-variation}, and integrate the
tangential Laplacian of $v$. For $0<k\leq2$ this
gives
\begin{equation}\label{eq:separator-inequality}
 \begin{split}
 \frac4{4-k}\int_\Sigma|\nabla_\Sigma\phi|^2
 \geq\int_\Sigma\biggl(
  -k\frac{\tDelta u}{u}+|B|^2
  +\widetilde\Ric(\nu,\nu)-\frac{H^2}{2}
  +\frac{h^2}{2}+\nu(h)\biggr)\phi^2 .
 \end{split}
\end{equation}
Indeed, the tangential expression before its last
estimate is
\[
 |\nabla\phi|^2+k\phi\langle\nabla\phi,\nabla v\rangle
 -\frac{k(4-k)}4\phi^2|\nabla v|^2
 \leq\frac4{4-k}|\nabla\phi|^2,
\]
and the discarded normal-gradient term has
coefficient $k(2-k)/2\geq0$.

When $n=3$, take $k=2$ and $\kappa=1/2$.
The scalar curvature of $\tg$ is
\[
 \widetilde\Scal=-r^2|A|^2+12-10s.
\]
Together with \eqref{eq:positive}, this gives
\begin{equation}\label{eq:scalar-bound}
 -2\frac{\tDelta u}{u}+\frac{\widetilde\Scal}{2}
 \geq(2\delta-\tfrac12)r^2|A|^2+3-\tfrac52s
 \geq\frac12.
\end{equation}
If $h$ satisfies
\begin{equation}\label{eq:h-barrier}
 2|\tgrad h|\leq\kappa+h^2,
\end{equation}
the Gauss equation in \eqref{eq:separator-inequality}
yields
\begin{equation}\label{eq:surface-separator}
 2\int_\Sigma|\nabla_\Sigma\phi|^2
 \geq\int_\Sigma(\tfrac14-K_\Sigma)\phi^2 .
\end{equation}
The constant test function and Gauss--Bonnet imply
\begin{equation}\label{eq:area-three}
 \Vol_{2,\tg}(\Sigma)\leq16\pi.
\end{equation}
The surface is oriented because it is a boundary
in an oriented three-manifold.

When $n=4$, take $k=5/3$ and $\kappa=1/3$.
For orthonormal $e,\nu$, define
\[
 \widetilde\biRic(e,\nu)
 =\widetilde\Ric(e,e)+\widetilde\Ric(\nu,\nu)
  -\widetilde{\operatorname{Sec}}(e,\nu).
\]
The conformal estimate in \cite[Proposition~5.5]{HLW}
is
\begin{equation}\label{eq:biric}
 \widetilde\biRic(e,\nu)
 \geq7-5s-\frac56r^2|A|^2.
\end{equation}
The hypersurface Gauss equation implies
\[
 |B|^2+\widetilde\Ric(\nu,\nu)-\frac{H^2}{2}
      +\Ric_\Sigma(e,e)
 \geq\widetilde\biRic(e,\nu).
\]
In an orthonormal frame with
$e=e_1$, the residual quadratic expression in $B$ is
\[
 \frac12B_{11}^2+\frac12(B_{22}-B_{33})^2
 +B_{12}^2+B_{13}^2+2B_{23}^2\geq0.
\]
Equations \eqref{eq:positive} and \eqref{eq:biric}
give
\[
 -\frac53\frac{\tDelta u}{u}+
 \widetilde\biRic(e,\nu)
 \geq\frac53(\delta-\tfrac12)r^2|A|^2+\frac13
 \geq\frac13.
\]
Let $\lambda_\Sigma(x)$ be the least eigenvalue of
$\Ric_\Sigma$ at $x\in\Sigma$. Under \eqref{eq:h-barrier},
\eqref{eq:separator-inequality} becomes
\begin{equation}\label{eq:spectral-ricci}
 \frac{12}{7}\int_\Sigma|\nabla_\Sigma\phi|^2
 \geq\int_\Sigma(\tfrac16-\lambda_\Sigma)\phi^2 .
\end{equation}
Since $12/7<2$, \eqref{eq:spectral-ricci} implies
$\lambda_1(-2\Delta_\Sigma+\lambda_\Sigma)\geq1/6$.

Rescale the induced metric on $\Sigma$ by $1/12$.
The corresponding spectral lower bound is then two.
Each component $\Sigma$ is a compact smooth three-manifold
without boundary. Apply Antonelli--Xu
\cite[Theorem~1(2) and Remark~1]{AX} with their dimension
equal to three, $\gamma=2=(3-1)/(3-2)$, and $\lambda=1$.
Remark~1 permits the quadratic-form hypothesis when the
least Ricci eigenvalue is only continuous. The theorem bounds
the volume of the universal cover by $|\Sph^3|$, hence also
the volume of $\Sigma$ in this rescaled metric.
Returning to the original conformal metric yields,
as in \cite[Theorem~5.6]{HLW},
\begin{equation}\label{eq:area-four}
 \Vol_{3,\tg}(\Sigma)\leq24\sqrt3\,|\Sph^3|.
\end{equation}
Thus, in either dimension, every closed connected
free-boundary component has uniformly bounded
$(n-1)$-volume in $\tg$. These componentwise estimates also apply when the band has
disconnected outer boundary.

We now place the construction inside the stable
exterior. Fix $p\in M$, set
$\rho=d_M(p,\cdot)$ and $C_0=r(p)+1$, and note that
\begin{equation}\label{eq:log-control}
 r\leq\rho+C_0,\qquad
 |d\log r|_{\tg}\leq1,\qquad
 |d\log(\rho+C_0)|_{\tg}\leq1
\end{equation}
almost everywhere. Choose $L>4\pi/\sqrt\kappa+10$ and
$\Lambda>2e^L$.
For large $R$, take a connected compact smooth
domain satisfying
\[
 B_{\Lambda R}(p)\subset D_R\subset B_{\Lambda R+1}(p).
\]
Let $d_0$ be the distance to the whole of $\partial D_R$
measured by curves in $(D_R,\tg)$.
For $q\in D_R$ with $d_0(q)<L$, integration of \eqref{eq:log-control} along
a curve from $\partial D_R$ gives
\begin{equation}\label{eq:collar-control}
 \rho(q)+C_0\geq e^{-L}(\Lambda R+C_0),
 \qquad
 r(q)\leq e^L(\Lambda R+1+C_0).
\end{equation}
For large $R$, this band is disjoint from
$B_R(p)$ and lies wholly in the stable exterior.
The ball $B_R(p)$ is beyond the inner edge.
The closure of the band is compact in the
abstract manifold; no properness has been used.

Choose a smooth approximation $d$ of $d_0$ in $\{0<d_0<L\}$,
with arbitrarily small uniform error and $|\tgrad d|\leq2$.
Choose regular values $a<b$ in the collar such that
$b-a>4\pi/\sqrt\kappa$, and on $a<d<b$ set
\begin{equation}\label{eq:cot-barrier}
 h(d)=\sqrt\kappa\cot\frac{\pi(d-a)}{b-a}.
\end{equation}
Then
\[
 2|\tgrad h|
 \leq\frac{4\pi\sqrt\kappa}{b-a}
          \csc^2\frac{\pi(d-a)}{b-a}
 \leq\kappa+h^2.
\]
The function $h$ tends to $+\infty$ at the outer
edge and to $-\infty$ at the inner edge.

In \eqref{eq:bubble-functional}, require $E$ to
contain a neighborhood of the outer edge and to
avoid a neighborhood of the inner edge.
The relative existence and regularity input is
\cite[Proposition~6.12 and its proof]{ChNotes}.
To reduce the weighted problem to its unweighted functional,
use, on the compact band $\{a\leq d\leq b\}$,
\[
 \bar g=u^{2k/(n-1)}\tg,\qquad
 \bar h=h\,u^{-k/(n-1)}.
\]
Indeed, $d\bar\sigma=u^k d\widetilde\sigma$ and
$\bar h\,d\bar\mu=h u^k d\widetilde\mu$, so the two
relative functionals agree. Since $u$ is smooth and positive
on this compact band, $\bar g$ is smooth up to its boundary
and $\bar h$ has the prescribed opposite divergences.
The boundary $d=a$ is the positive edge, where the set is full,
and $d=b$ is the negative edge, where it is empty.
Since $a$ and $b$ are regular values, the level hypersurfaces
near these two levels have uniformly bounded weighted mean
curvature $H+k\nu(\log u)$. The divergence of $h$ at the edges
therefore gives strict barriers at $a+\varepsilon_0$ and
$b-\varepsilon_0$, for some $\varepsilon_0>0$.
These barrier bounds and $\varepsilon_0$ may depend on the
fixed band and on $R$; uniformity as $R\to\infty$ is not needed.

Minimize first on $\{a+\varepsilon<d<b-\varepsilon\}$, where
$0<\varepsilon<\varepsilon_0$, extending competitors by the
prescribed full and empty sets across its boundary.
Here $u^k$ is smooth and bounded above and below by positive
constants, so the direct method gives a minimizer.

Comparison with the level hypersurfaces confines its free boundary
to $\{a+\varepsilon_0<d<b-\varepsilon_0\}$, independently of
$0<\varepsilon<\varepsilon_0$.
The difference $\chi_E-\chi_{E_0}$ vanishes near the singular
edges, so the volume term is finite.
The minimizer is unconstrained near its free boundary, which is
smooth because the band has dimension $n=3$ or $4$; this is
the interior regularity input in \cite[Proposition~6.12]{ChNotes}
applied to $\bar g$ and the smooth interior forcing $\bar h$.
The first and second variations therefore apply.
Here \cite[Section~5.1]{HLW} supplies the weighted variational
formulas used above; existence is supplied by the relative
construction just described.
The minimization is performed on every component of the band,
with the prescribed condition on each boundary level that is present.
If a component has only one type of edge, the same truncated
direct method and its barrier at that edge apply. This uses
the proof of the relative existence result and does not assume
that each component has both edge types.

Let $E_R$ be the extension of the minimizing set by the prescribed
full outer region up to $\partial D_R$ and by the prescribed empty
inner region. Let $U_R$ be the connected component of
$D_R\setminus\overline{E_R}$ containing $B_R(p)$.
Its boundary is a union of closed separator components.
Let $\mathcal F_R$ be the collection of components of
$M\setminus\overline{U_R}$ with compact closure, and set
\[
\Omega_R=\operatorname{int}\left(
\overline{U_R}\cup\bigcup_{F\in\mathcal F_R}\overline F\right).
\]
The collection $\mathcal F_R$ is finite because $\partial U_R$
is compact and smooth. Thus $\Omega_R$ is a connected relatively
compact smooth domain containing $B_R(p)$, and every component of
$M\setminus\overline{\Omega_R}$ has noncompact closure.
Filling removes boundary components and creates no new boundary,
so $\partial\Omega_R$ is a subcollection of the separators.

Put $b_1^c=\dim H_c^1(M;\R)$.
Lemma~\ref{lem:homology} shows that
$\partial\Omega_R$ has at most $b_1^c+1$
components. On each of them, \eqref{eq:collar-control}
gives $r\leq CR$, and \eqref{eq:area-three} or
\eqref{eq:area-four} gives a uniform conformal
area bound. Consequently,
\[
 \begin{split}
 \Vol_{n-1,g}(\partial\Omega_R)
 &=\sum_{\Sigma\subset\partial\Omega_R}
      \int_\Sigma r^{n-1}\,d\widetilde\sigma\\
 &\leq C(b_1^c+1)R^{n-1}.
 \end{split}
\]
Apply \eqref{eq:isoperimetric} to the relatively compact
abstract domain $\Omega_R$ with the original induced metric $g$.
This gives
\[
 \Vol(B_R(p))\leq\Vol(\Omega_R)
 \leq C\Vol_{n-1}(\partial\Omega_R)^{n/(n-1)}
 \leq C'R^n.
\]
Enlarging $C'$ covers bounded radii.
Finally, apply \cite[Theorem~1.1]{FS} to the
complete original immersion. It gives properness
and extrinsic Euclidean volume growth.
\end{proof}

\begin{lemma}\label{lem:decay}
Let $n\in\{3,4\}$ and $\delta>(n-2)/n$.
Suppose that $X:M^n\to\R^{n+1}$ is a complete two-sided minimal
immersion without boundary which is proper, has extrinsic
Euclidean volume growth, and is $\delta$-stable outside a
compact set. Then
$|X|\,|A|\to0$ at infinity, and $\ind_1(M)<\infty$.
\end{lemma}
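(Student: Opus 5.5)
The plan has two stages: curvature decay by blow-down, then finite Morse index from a Hardy inequality.

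\emph{Stage 1: $|X|\,|A|\to0$.} Argue by contradiction. Suppose there are points $x_j\to\infty$ with $|X(x_j)|\,|A|(x_j)\geq\varepsilon>0$. Properness gives $R_j=|X(x_j)|\to\infty$.

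First, Proposition~\ref{prop:curvature} applied on the stable exterior, in the form \eqref{eq:exterior-curvature}, gives $|A|(x)\leq C/d_M(x,K)$. Since $d_M(x,K)\geq |X(x)|-C$ on the exterior, $|X|\,|A|$ is bounded there.

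Next, rescale by $R_j^{-1}$ and consider the immersions restricted to the annulus region $\{1/2<|X|<2\}$. These have:
\begin{itemize}
\item uniformly bounded second fundamental form;
\item uniform extrinsic area bounds, from \eqref{eq:extrinsic-volume};
\item $\delta$-stability on that region, for $j$ large.
\end{itemize}
Combined with the monotonicity formula, this lets us extract a subsequence of the full blow-downs $R_j^{-1}X$ that converges as varifolds to a stationary cone $\mathcal C$. The convergence is smooth, with bounded multiplicity, away from the origin: the curvature bound supplies local graphical convergence, and the area bound controls the number of sheets.

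The limit cone is smooth away from $0$ and $\delta$-stable on $\mathcal C\setminus\{0\}$, where stability passes to the limit sheetwise. The link $\Sigma=\mathcal C\cap\Sph^n$ is then a closed minimal hypersurface of $\Sph^n$. Now use the Hong--Li--Wang cone calculation, which I expect to be the main obstacle to cite or verify correctly. Test $Q_\delta$ on $\mathcal C$ with $f=r^{-(n-2)/2}\phi(\log r)\psi(\omega)$. Since $|A_{\mathcal C}|^2=r^{-2}|A_\Sigma|^2$ and Hardy's constant on cones is $(n-2)^2/4$, stability yields
\[
\lambda_1\bigl(-\Delta_\Sigma-\delta|A_\Sigma|^2\bigr)\geq-\frac{(n-2)^2}{4}.
\]
With the constant test function $\psi=1$, this gives $\delta\int_\Sigma|A_\Sigma|^2\leq\frac{(n-2)^2}{4}|\Sigma|$. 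On the other hand, Simons' inequality for a nontotally geodesic minimal hypersurface of $\Sph^n$ (the Simons/HLW argument with $|A_\Sigma|$ as test function together with the refined Kato inequality) gives a lower bound. Together these force $A_\Sigma\equiv0$ when $\delta>(n-2)/n$, for $n=3,4$; this is exactly the computation behind \cite[Corollary~1.4]{HLW}. So $\mathcal C$ is a union of hyperplanes, possibly with multiplicity.

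Smooth sheetwise convergence on the annulus then makes $R_j|A|(x_j)\to0$, because the rescaled points $R_j^{-1}X(x_j)$ lie on the unit sphere. This contradicts the choice of $x_j$.

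\emph{Stage 2: $\ind_1<\infty$.} By Stage 1, choose a compact set $K'$ outside of which $|A|^2\leq\varepsilon^2|X|^{-2}$. Use the extrinsic Hardy inequality on minimal submanifolds, which follows from integrating $\diver_M(X|X|^{-2}f^2)$ with $\diver_M X=n$ and $|X^T|\leq|X|$:
\[
\frac{(n-2)^2}{4}\int_M\frac{f^2}{|X|^2}\dd\leq\int_M|\nabla f|^2\dd,
\qquad f\in\Cc(M).
\]
With $\varepsilon^2<(n-2)^2/4$ this gives $Q_1(f)\geq0$ for $f$ supported outside $K'$, i.e.\ $M$ is stable outside a compact set.

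Finally, use the standard fact that stability outside a compact set implies finite index. Suppose $Q_1$ were negative definite on a space of dimension larger than $\ind$ of the Dirichlet problem on a compact neighborhood $K''\supset K'$. Use a cutoff that splits each test function into a part supported in $K''$ and a part supported outside $K'$. The interior Dirichlet problem has finitely many negative eigenvalues, and the exterior form is nonnegative. A dimension count via a Fischer--Courant min-max argument (as in Fischer-Colbrie) then bounds $\ind_1(M)$.

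Alternatively, on $M\setminus K'$ the potential $|A|^2\leq\varepsilon^2|X|^{-2}$ gives the same bound directly.
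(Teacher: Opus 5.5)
Your architecture is the paper's (blow-down to a cone, a Hong--Li--Wang type Simons/stability computation on the cone, then the extrinsic Hardy inequality). However, two steps fail as written.

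\emph{Cone rigidity.} Your separation of variables, which gives $\lambda_1(-\Delta_\Sigma-\delta|A_\Sigma|^2)\geq-(n-2)^2/4$ on the link, is correct. The contradiction you then describe is not. The constant test function yields $\delta\int_\Sigma|A_\Sigma|^2\leq\frac{(n-2)^2}{4}|\Sigma|$. Simons' identity gives no lower bound for $\int_\Sigma|A_\Sigma|^2$ in terms of $|\Sigma|$, so this inequality plays no role. With $\psi=|A_\Sigma|$, Simons' identity on $\Sigma^{m}\subset\Sph^{m+1}$ with $m=n-1$, together with the refined Kato inequality, gives
\[
\Bigl(\delta\,\tfrac{m+2}{m}-1\Bigr)\int_\Sigma|\nabla|A_\Sigma||^2+\Bigl(\delta m-\tfrac{(n-2)^2}{4}\Bigr)\int_\Sigma|A_\Sigma|^2\leq0 .
\]
The first coefficient is negative for $\delta<(n-1)/(n+1)$, that is, on $(1/3,1/2)$ when $n=3$ and on $(1/2,3/5)$ when $n=4$. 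So this test function does not cover the stated range. The right choice is $\psi=(|A_\Sigma|^2+\eta)^{\delta/2}$ with $\eta\downarrow0$. Then the two coefficients become $\delta\bigl(\delta-\frac{n-3}{n-1}\bigr)$ and $\delta(n-1)-\frac{(n-2)^2}{4}$. Both are positive for $\delta>(n-2)/n$ and $n=3,4$, which forces $A_\Sigma\equiv0$. This is exactly the paper's test function $(|A|^2+\eta)^{\delta/2}\phi(r)$, i.e.\ the HLW computation with $p=2-\delta$, written on the link. The paper instead keeps the radial variable and uses the radial Hardy quotient. It arrives at the equivalent conditions $\delta>(n-2)^2/(4(n-1))$ and $p\leq 1+2/(n-1)$.

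\emph{Finite index.} In Stage 2 you discard the margin you have just produced. From $|A|^2\leq\varepsilon^2|X|^{-2}$ outside $K'$ you conclude only that $M$ is stable outside $K'$. You then invoke ``stability outside a compact set implies finite index.'' That is the converse of Fischer-Colbrie's theorem, and it is not a general fact for Schr\"odinger operators. Your splitting sketch does not prove it, because $Q_1(\theta f+(1-\theta)f)$ has cross terms. In IMS form, $Q_1(f)=Q_1(\theta f)+Q_1(\zeta f)-\int(|\nabla\theta|^2+|\nabla\zeta|^2)f^2$ with $\theta^2+\zeta^2=1$. A merely nonnegative exterior form $Q_1(\zeta f)$ cannot absorb the part of this negative error carried by $\zeta f$.

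The fix is the paper's. The Hardy inequality \eqref{eq:hardy} holds for every $f\in\Cc(M)$, not only for exterior functions. So with $\varepsilon^2\leq(n-2)^2/8$ one gets, globally, $Q_1(f)\geq\frac12\int_M|\nabla f|^2-\int_{K'}|A|^2f^2$. The right-hand form has a bounded, compactly supported potential, so it has finite index by the CLR bound \eqref{eq:clr}, which rests on the Michael--Simon Sobolev inequality. Your final ``alternatively'' sentence points in this direction. It still needs both the global use of Hardy and this finiteness input for compactly supported potentials.
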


\begin{proof}
Decrease the coefficient so that
$(n-2)/n<\delta<1$.
Choose a nonempty compact domain $K$ outside which
the immersion is $\delta$-stable, and take
$z\notin X(M)$ and $r=|X-z|$ as above.
Proposition~\ref{prop:curvature} and
$d_M(x,K)\geq r(x)-\sup_Kr$ imply
\begin{equation}\label{eq:scale-curvature}
 |A|(x)\leq\frac{C}{d_M(x,K)},
 \qquad r(x)|A|(x)\leq C'
\end{equation}
outside a fixed extrinsic ball.

Suppose there were points $x_j\to\infty$ with
$r(x_j)|A|(x_j)\geq\varepsilon_0>0$.
By properness, $R_j=r(x_j)\to\infty$.
Set $\Sigma_j=M$, $F_j=R_j^{-1}(X-z)$, and $g_j=R_j^{-2}g$.
The marked points satisfy
\begin{equation}\label{eq:blowdown-marking}
 |F_j(x_j)|=1,\qquad
 |A_j|(x_j)=R_j|A|(x_j)\geq\varepsilon_0.
\end{equation}
Write $\mu_j=(F_j)_*(d\mu_{g_j})$; explicitly, for every
ambient Borel set $E$,
\[
 \mu_j(E)=\Vol_{g_j}(F_j^{-1}(E)).
\]
Thus mass is counted on the full abstract immersion, with
multiplicity even when different sheets have the same image.
The extrinsic volume bound gives a subsequential stationary
integral varifold limit $C$. By monotonicity, the finite limit
\[
\Theta_\infty=\lim_{R\to\infty}
R^{-n}\Vol\bigl(X^{-1}(B_R^{\R^{n+1}}(z))\bigr)
\]
exists. At every continuity radius $a>0$ of the limiting mass,
the rescaled volume ratio tends to $\Theta_\infty$.
Thus the volume ratio of $C$ is constant, and equality in the
monotonicity formula shows that $C$ is a cone.
On every compact
annulus about the origin, \eqref{eq:scale-curvature}
gives a uniform bound for the rescaled second
fundamental forms.

Set $\mathcal O=\R^{n+1}\setminus\{0\}$ and
\[
    A_{a,b}:=\{x\in\mathbb R^{n+1}:a<|x|<b\}.
\]
We construct one abstract immersed limit over $\mathcal O$.

Fix
\[
    0<a<a'<b'<b<\infty.
\]
The curvature estimate on $A_{a,b}$ gives a uniform graphical
radius for all sufficiently large $j$. Decrease it using $a',b'$
to choose
\[
 \rho=\rho(n,\delta,C',a,a',b',b)>0,\qquad
 2\rho<\min\{a'-a,b-b'\}.
\]
Here $C'$ is the fixed bound in \eqref{eq:scale-curvature}.
The radius and a constant $c=c(n)>0$ are independent of $j$.
For every
\(p\in F_j^{-1}(\overline A_{a',b'})\),
the intrinsic ball \(B^{g_j}_{2\rho}(p)\) is contained in
\(F_j^{-1}(A_{a,b})\), is represented by a graph with uniform
estimates, and
\[
    \operatorname{vol}_{g_j} B^{g_j}_{\rho}(p)
        \ge c\rho^n .
\]
Choose a maximal \(2\rho\)-separated family
\(\{p_{j,\alpha}\}_{\alpha=1}^{N_j}\) in
\(F_j^{-1}(\overline A_{a',b'})\).  The balls
\(B^{g_j}_{\rho}(p_{j,\alpha})\) are disjoint in the abstract
domain, irrespective of intersections of their images, and hence
\[
    N_j c\rho^n
    \le \mu_j(A_{a,b}).
\]
The local mass bound therefore gives a uniform bound for \(N_j\).
By maximality, the balls \(B^{g_j}_{2\rho}(p_{j,\alpha})\) cover
\(F_j^{-1}(\overline A_{a',b'})\). Since the unrestricted
rescaled metrics $g_j$ are complete, their closed intrinsic
balls are compact; consequently
\(F_j^{-1}(\overline A_{a',b'})\) is compact.  Thus, after deleting
the preimage of the origin,
\[
    F_j:F_j^{-1}(\mathbb R^{n+1}\setminus\{0\})
        \longrightarrow \mathbb R^{n+1}\setminus\{0\}
\]
is a proper immersion. This also follows directly, for every $j$,
from the assumed properness of $X$.

The graphical estimates and the interior estimates for the minimal
surface equation give uniform bounds for all covariant derivatives
of the second fundamental form on compact subsets of
\(\mathcal O\). For each compact subset and derivative order,
the bound is independent of $j$; finitely many initial $j$ can
be included by smoothness and properness of the approximants.
The mass bounds hold on these same compact subsets, and
\eqref{eq:blowdown-marking} shows that every image meets the
fixed compact set $\Sph^n\subset\mathcal O$.
Breuning \cite[Corollary~7.13]{Breuning}, applied to this open
target, therefore produces, after passing to a subsequence,
an abstract manifold \(\Sigma_\infty\) without boundary and a
proper smooth immersion
\[
    F_\infty:\Sigma_\infty
        \longrightarrow\mathbb R^{n+1}\setminus\{0\}.
\]
More precisely, that corollary supplies open exhaustions
\[
 U_j\Subset U_{j+1},\quad \bigcup_jU_j=\Sigma_\infty,
 \qquad
 \mathcal O_j\Subset\mathcal O_{j+1},\quad
 \bigcup_j\mathcal O_j=\mathcal O,
\]
and diffeomorphisms onto the entire indicated preimages,
\begin{equation}\label{eq:immersion-exhaustion}
 \varphi_j:U_j\longrightarrow F_j^{-1}(\mathcal O_j),
 \qquad
 \|F_j\circ\varphi_j-F_\infty\|_{C^0(U_j)}\longrightarrow0,
\end{equation}
with $F_j\circ\varphi_j\to F_\infty$ locally smoothly on
$\Sigma_\infty$. In particular the limit is minimal.
The theorem constructs the abstract manifold and these
parametrizations on one exhaustion, providing compatible charts
across overlapping annuli and no artificial annular boundary.
Properness of $F_\infty$ is also a conclusion of the theorem;
thus $F_\infty^{-1}(\overline A_{a',b'})$ is compact.

To check coverage and mass identification, fix compact target
annuli $K_*,K_*^+$ with
$K_*\subset\operatorname{int}K_*^+\Subset\mathcal O$.
For large $j$, $K_*\subset\mathcal O_j$.
Every $y\in F_j^{-1}(K_*)$ has a preimage
$p=\varphi_j^{-1}(y)$, and the uniform $C^0$ estimate in
\eqref{eq:immersion-exhaustion} places $F_\infty(p)$ in $K_*^+$.
Hence all such $p$ lie in the fixed compact set
$F_\infty^{-1}(K_*^+)$, contained in $U_j$ for large $j$.
Change of variables and local smooth convergence on this compact
set give convergence of the full induced-volume integrals for
ambient tests supported in $K_*$. The same argument applies
to tests depending on tangent planes, identifying the induced
varifold of $F_\infty$ with $C$ on $\mathcal O$.
Transverse crossing branches remain distinct abstract sheets;
coincident sheets contribute separately to these integrals.
No sheets over a compact subannulus are discarded, and no
embeddedness of the support is asserted.

In particular, the points $p_j=\varphi_j^{-1}(x_j)$ lie in
a fixed compact annular preimage. After taking a subsequence,
$p_j\to p_\infty\in\Sigma_\infty$, and smooth convergence
together with \eqref{eq:blowdown-marking} gives
\begin{equation}\label{eq:retained-mark}
 |F_\infty(p_\infty)|=1,\qquad
 |A_\infty|(p_\infty)\geq\varepsilon_0.
\end{equation}

The associated measure is the restriction of the limiting cone
varifold away from the origin.  Equality in the monotonicity formula
therefore gives
\[
    F_\infty^\perp=0
\]
on every sheet.  If \(r=|F_\infty|\), then
\[
    |\nabla_{\Sigma_\infty}r|=1,
    \qquad
    Y:=r\nabla r
      =\nabla_{\Sigma_\infty}\frac{r^2}{2},
    \qquad
    dF_\infty(Y)=F_\infty .
\]
Since \(F_\infty\) is proper, the flow \(\Phi_t\) of \(Y\) exists
for every \(t\in\mathbb R\); indeed a trajectory on a bounded time
interval remains in the preimage of a compact annulus.  Moreover,
\[
    F_\infty(\Phi_t(p))=e^tF_\infty(p).
\]
Hence
\[
    \Gamma:=r^{-1}(1)=F_\infty^{-1}(S^n)
\]
is a compact smooth manifold, possibly disconnected, and
\[
    \mathbb R\times\Gamma\longrightarrow\Sigma_\infty,
    \qquad
    (t,q)\longmapsto\Phi_t(q),
\]
is a diffeomorphism.  Equivalently,
\[
    \Sigma_\infty\simeq(0,\infty)\times\Gamma,
    \qquad
    F_\infty(s,q)=sF_\infty(1,q).
\]
All subsequent integrals are taken on this abstract immersion,
with multiplicity.

For $\eta>0$ and a smooth radial function $\phi(r)$ supported
in a compact annulus, the functions
\[
 f_j=(|A_j|^2+\eta)^{\delta/2}\phi(r)
\]
are compactly supported on the rescaled abstract
immersions by properness. Their supports lie
outside the rescaled unstable core for large
$j$. For fixed $\eta>0$, the full-preimage coverage just proved
places all these supports in a fixed compact part of the abstract
limit under $\varphi_j^{-1}$. Smooth convergence therefore passes
their stability inequalities, including all sheets, to the cone
as $j\to\infty$.

We use the local calculation in
\cite[Equations~(4.4)--(4.6)]{HLW} with $p=2-\delta$.
For $n=3,4$ and $(n-2)/n<\delta<1$, we have
$0<p\leq1+2/(n-1)$.
Combining the regularized Simons inequality with the preceding
test gives, after discarding nonnegative terms,
\[
\delta(1+\delta)\int_C r^{-2}|A_C|^2
(|A_C|^2+\eta)^{\delta-1}\phi^2
\leq\int_C (|A_C|^2+\eta)^\delta|\nabla\phi|^2.
\]
Since $0<\delta<1$, the left integrand is bounded by
$r^{-2}|A_C|^{2\delta}\phi^2$. The test is supported in a
compact annulus, so dominated convergence as $\eta\downarrow0$
gives
\begin{equation}\label{eq:cone-inequality}
\delta(1+\delta)\int_C r^{-2}|A_C|^{2\delta}\phi^2
\leq\int_C|A_C|^{2\delta}|\nabla\phi|^2.
\end{equation}
Only the annular tests defined above are used. The further inequality $\delta>(n-2)^2/[4(n-1)]$,
needed in the radial argument, follows from
$\delta>(n-2)/n$ in dimensions three and four.

The sharp radial Hardy quotient now excludes a nonflat cone.
If $A_C\not\equiv0$, separate the positive
angular factor using
$|A_C|(r,\vartheta)=r^{-1}|A_C|(1,\vartheta)$.
Put $\beta=n-2-2\delta$, and choose
$0\neq\chi\in\Cc(\R)$. For
\[
 \phi_T(r)=r^{-\beta/2}\chi(\log r/T),
\]
the quotient of the right radial integral in
\eqref{eq:cone-inequality} by the left integral
without its factor $\delta(1+\delta)$ is
\[
 \frac{\beta^2}{4}
 +\frac1{T^2}\frac{\int_\R|\chi'|^2}{\int_\R|\chi|^2}.
\]
Letting $T\to\infty$ would imply
\[
 \delta(1+\delta)\leq\frac{(n-2-2\delta)^2}{4},
 \quad\text{or equivalently}\quad
 \delta\leq\frac{(n-2)^2}{4(n-1)},
\]
a contradiction. Thus $A_C\equiv0$.
This contradicts the retained curvature bound
\eqref{eq:retained-mark} at $p_\infty$. We have proved
\begin{equation}\label{eq:small-curvature}
 r|A|\longrightarrow0.
\end{equation}
Translation of the center does not change this
conclusion at infinity.

The extrinsic Hardy inequality is
\begin{equation}\label{eq:hardy}
 \frac{(n-2)^2}{4}\int_M\frac{f^2}{r^2}\dd
 \leq\int_M|\nabla f|^2\dd,\qquad f\in\Cc(M).
\end{equation}
To verify it, let $Y=\nabla r/r$.
Minimality gives
$\diver Y=(n-2|\nabla r|^2)/r^2$.
Expand the nonnegative integral
$\int|\nabla f+\frac{n-2}{2}fY|^2$ and use
$|\nabla r|\leq1$.

By \eqref{eq:small-curvature}, outside a compact
set $K'$ we have
$|A|^2\leq(n-2)^2/(8r^2)$. Properness makes
$K'$ compact. It follows from \eqref{eq:hardy}
that
\[
 Q_1(f)\geq\frac12\int_M|\nabla f|^2\dd
                  -\int_{K'}|A|^2f^2\dd.
\]
The right-hand form has finite index by
\eqref{eq:clr}: multiply it by two and use
$V=2\mathbf1_{K'}|A|^2$, which is bounded and compactly
supported. Hence $\ind_1(M)<\infty$.
\end{proof}

\begin{proof}[Proof of Theorem~\ref{thm:index}]
Suppose first that $\ind_\delta(M)<\infty$.
There is a compact domain outside which
$Q_\delta$ is nonnegative. Otherwise one could
choose negative test functions successively
outside the supports of all previous ones.
Their disjoint supports would span negative
subspaces of arbitrarily large dimension.

Lemma~\ref{lem:cohomology} gives
$\dim H_c^1(M;\R)<\infty$.
Since
\[
 \left(\frac{n-1}{n}\right)^2
 =\frac{n-2}{n}+\frac1{n^2},
\]
Proposition~\ref{prop:volume} applies. It gives
intrinsic Euclidean volume growth, properness,
and \eqref{eq:extrinsic-volume}.
Lemma~\ref{lem:decay} now gives
\eqref{eq:curvature-decay} and finite ordinary
index.
For $n=3$, \cite[Theorem~5]{CL} implies
$\int_M|A|^3<\infty$; for $n=4$, use
\cite[Theorem~1.4]{CLMS}.

Conversely, if $\int_M|A|^n\dd<\infty$,
apply \eqref{eq:clr} with $V=\tau|A|^2$.
For every $\tau>0$,
\begin{equation}\label{eq:index-bound}
 \ind_\tau(M)\leq C_n\tau^{n/2}\int_M|A|^n\dd
 <\infty.
\end{equation}
In particular, the ordinary index is finite.
The citation to \cite{Tysk} in the introduction records the
classical provenance of this direction; the proof here is
the Sobolev--CLR argument above.
The implication from finite ordinary index to
finite total curvature is again supplied by
\cite{CL,CLMS}. This proves all equivalences
and the remaining assertions.
\end{proof}

\begin{proof}[Proof of Corollary~\ref{cor:spectrum}]
Theorem~\ref{thm:index} gives $|A|\to0$ at
infinity. Hence $|A|$ is bounded, and the
Schr\"odinger operators in the statement are
bounded below and self-adjoint on the domain
of $-\Delta$.
The Gauss equation gives
$\Ric(v,v)=-|Av|^2$, so the Ricci curvature
tends to zero at infinity. Theorem~1 of
Lu--Zhou \cite{LZ} yields
$\sigma_{\ess}(-\Delta)=[0,\infty)$.

Multiplication by $|A|^2$ is a relatively
compact perturbation of $-\Delta$.
Indeed, its restriction to a compact set
composed with $(-\Delta+1)^{-1}$ is compact
by local elliptic estimates and Rellich
compactness. The operator norm of the
remaining tail is bounded by the supremum
of $|A|^2$ on that tail, which tends to zero.
Weyl's theorem therefore gives
$\sigma_{\ess}(L_\tau)=[0,\infty)$ for every
$\tau\geq0$.
Finally, \eqref{eq:index-bound} bounds the
dimension of the negative spectral subspace
for each $\tau>0$, proving the last assertion.
\end{proof}


\begin{thebibliography}{99}

\bibitem{AX}
G. Antonelli and K. Xu,
\emph{New spectral Bishop--Gromov and Bonnet--Myers theorems
and applications to isoperimetry},
J. Eur. Math. Soc., to appear;
\href{https://arxiv.org/abs/2405.08918}{arXiv:2405.08918},
version 4 (2026).

\bibitem{Breuning}
P. Breuning,
\emph{Immersions with bounded second fundamental form},
preprint (2012),
\href{https://arxiv.org/abs/1201.4562}{arXiv:1201.4562},
version 1.

\bibitem{CCMMR}
X. Cabr\'e, G. Catino, L. Mari, P. Mastrolia, and A. Roncoroni,
\emph{Gradient estimates for the Green kernel under spectral Ricci
bounds, and the stable Bernstein theorem in $\R^4$},
preprint (2026),
\href{https://arxiv.org/abs/2604.14393}{arXiv:2604.14393}.

\bibitem{CMMR}
G. Catino, L. Mari, P. Mastrolia, and A. Roncoroni,
\emph{Criticality, splitting theorems under spectral Ricci
bounds and the topology of stable minimal hypersurfaces},
preprint, \href{https://arxiv.org/abs/2412.12631}{arXiv:2412.12631},
version 5 (2026).

\bibitem{CMR}
G. Catino, P. Mastrolia, and A. Roncoroni,
\emph{Two rigidity results for stable minimal hypersurfaces},
Geom. Funct. Anal. \textbf{34} (2024), no.~1, 1--18.
\href{https://doi.org/10.1007/s00039-024-00662-1}
{doi:10.1007/s00039-024-00662-1}.

\bibitem{CW}
Q.-M. Cheng and G. Wei,
\emph{Complete two-sided $\delta$-stable minimal hypersurfaces
in $\R^{n+1}$},
preprint (2025),
\href{https://arxiv.org/abs/2507.00342}{arXiv:2507.00342}.

\bibitem{CZ}
X. Cheng and D. Zhou,
\emph{Manifolds with weighted Poincar\'e inequality and
uniqueness of minimal hypersurfaces},
Comm. Anal. Geom. \textbf{17} (2009), no.~1, 139--154.
\href{https://doi.org/10.4310/CAG.2009.v17.n1.a6}
{doi:10.4310/CAG.2009.v17.n1.a6}.

\bibitem{ChNotes}
O. Chodosh,
\emph{Stable minimal surfaces and positive scalar curvature},
lecture notes, Stanford University, 2021.
\url{https://web.stanford.edu/~ochodosh/Math258-min-surf.pdf}.

\bibitem{CLA}
O. Chodosh and C. Li,
\emph{Stable anisotropic minimal hypersurfaces in $\R^4$},
Forum Math. Pi \textbf{11} (2023), paper no.~e3, 22~pp.
\href{https://doi.org/10.1017/fmp.2023.1}
{doi:10.1017/fmp.2023.1}.

\bibitem{CL}
O. Chodosh and C. Li,
\emph{Stable minimal hypersurfaces in $\R^4$},
Acta Math. \textbf{233} (2024), no.~1, 1--31.
\href{https://doi.org/10.4310/ACTA.2024.v233.n1.a1}
{doi:10.4310/ACTA.2024.v233.n1.a1}.

\bibitem{CLMS}
O. Chodosh, C. Li, P. Minter, and D. Stryker,
\emph{Stable minimal hypersurfaces in $\R^5$},
Ann. of Math. (2) \textbf{204} (2026), no.~1, 423--453.
\href{https://doi.org/10.4007/annals.2026.204.1.5}
{doi:10.4007/annals.2026.204.1.5}.

\bibitem{DLZ}
Y. Dong, H. Lin, and W. Zhang,
\emph{Spectral Bernstein theorems for submanifolds in Euclidean spaces},
Math. Ann. \textbf{395} (2026), article~65.
\href{https://doi.org/10.1007/s00208-026-03488-4}
{doi:10.1007/s00208-026-03488-4}.

\bibitem{FS}
E. Florit-Simon,
\emph{Equivalence of intrinsic and extrinsic area bounds
for minimal surfaces},
preprint (2026),
\href{https://arxiv.org/abs/2605.06468}{arXiv:2605.06468}.

\bibitem{FLS}
R. L. Frank, E. H. Lieb, and R. Seiringer,
\emph{Equivalence of Sobolev inequalities and Lieb--Thirring
inequalities},
in \emph{XVIth International Congress on Mathematical Physics},
World Scientific, Hackensack, NJ, 2010, pp.~523--535.
\href{https://arxiv.org/abs/0909.5449}{arXiv:0909.5449}.

\bibitem{Fu}
H.-P. Fu,
\emph{The structure of $\delta$-stable minimal hypersurfaces in $\R^{n+1}$},
Hokkaido Math. J. \textbf{40} (2011), no.~1, 103--110.
\href{https://doi.org/10.14492/hokmj/1300108401}
{doi:10.14492/hokmj/1300108401}.

\bibitem{HLW}
H. Hong, H. Li, and G. Wang,
\emph{On $\delta$-stable minimal hypersurfaces in $\R^{n+1}$},
preprint (2024),
\href{https://arxiv.org/abs/2407.03222}{arXiv:2407.03222}.

\bibitem{HW}
H. Hong and G. Wang,
\emph{Mixed radial volume comparison under spectral Ricci bounds},
preprint (2026),
\href{https://arxiv.org/abs/2609.02430}{arXiv:2609.02430}.

\bibitem{LW02}
P. Li and J. Wang,
\emph{Minimal hypersurfaces with finite index},
Math. Res. Lett. \textbf{9} (2002), no.~1, 95--103.
\href{https://doi.org/10.4310/MRL.2002.v9.n1.a7}
{doi:10.4310/MRL.2002.v9.n1.a7}.

\bibitem{LZ}
Z. Lu and D. Zhou,
\emph{On the essential spectrum of complete non-compact manifolds},
J. Funct. Anal. \textbf{260} (2011), no.~11, 3283--3298.
\href{https://doi.org/10.1016/j.jfa.2010.10.010}
{doi:10.1016/j.jfa.2010.10.010}.

\bibitem{Mazet}
L. Mazet,
\emph{Stable minimal hypersurfaces in $\R^6$},
preprint (2024),
\href{https://arxiv.org/abs/2405.14676}{arXiv:2405.14676}.

\bibitem{MS}
J. H. Michael and L. M. Simon,
\emph{Sobolev and mean-value inequalities on generalized
submanifolds of $\R^n$},
Comm. Pure Appl. Math. \textbf{26} (1973), 361--379.
\href{https://doi.org/10.1002/cpa.3160260305}
{doi:10.1002/cpa.3160260305}.

\bibitem{TZ}
L.-F. Tam and D. Zhou,
\emph{Stability properties for the higher dimensional
catenoid in $\R^{n+1}$},
Proc. Amer. Math. Soc. \textbf{137} (2009), no.~10, 3451--3461.
\href{https://arxiv.org/abs/0708.3310}{arXiv:0708.3310}.

\bibitem{Tysk}
J. Tysk,
\emph{Finiteness of index and total scalar curvature for
minimal hypersurfaces},
Proc. Amer. Math. Soc. \textbf{105} (1989), no.~2, 429--435.
\href{https://doi.org/10.1090/S0002-9939-1989-0946639-1}
{doi:10.1090/S0002-9939-1989-0946639-1}.

\end{thebibliography}
\end{document}